\newtheorem{theorem}{Theorem}[section]
\newtheorem{lemma}[theorem]{Lemma}
\newtheorem{corollary}[theorem]{Corollary}
\theoremstyle{definition}
\theoremstyle{remark}
\numberwithin{equation}{section}
\newcommand{\mmod}[1]{\,\,(\text{mod}\,\,#1)}
\def\bfh{{\mathbf h}}
\def\bfm{{\mathbf m}}
\def\bfn{{\mathbf n}}
\def\bfz{{\mathbf z}}
\def\calA{{\mathcal A}}  
\def\calB{{\mathcal B}}
\def\calD{{\mathcal D}}
 \def\Ftil{{\widetilde F}}
\def\calU{{\mathcal U}} \def\Util{{\widetilde U}}
\def\ftil{\widetilde{f}}
\def\dbN{{\mathbb N}}
\def\dbZ{{\mathbb Z}}
\def\grB{{\mathfrak B}}
\def\grm{{\mathfrak m}}\def\grM{{\mathfrak M}}\def\grN{{\mathfrak N}}
\def\grn{{\mathfrak n}}
\def\grB{{\mathfrak B}}
\def\grp{{\mathfrak p}} \def\grP{{\mathfrak P}}
\def\grq{{\mathfrak q}} \def\grQ{{\mathfrak Q}}
\def\alp{{\alpha}} 
\def\bet{{\beta}}  
\def\gam{{\gamma}} 
\def\del{{\delta}} \def\Del{{\Delta}}
\def\tet{{\theta}}  
\def\kap{{\kappa}}
 \def\Lam{{\Lambda}}
 \def\Sig{{\Sigma}} 
\def\Ups{{\Upsilon}} 
\def\ome{{\omega}} \def\Ome{{\Omega}}
\def\d{{\partial}}
\def\eps{\varepsilon}
\def\le{\leqslant} \def\ge{\geqslant}
\def\d{{\,{\rm d}}}
\begin{document}
\title[Sums of three cubes]{Sums of three cubes, II}
\author[Trevor D. Wooley]{Trevor D. Wooley}
\address{School of Mathematics, University of Bristol, University Walk, Clifton, Bristol 
BS8 1TW, United 
Kingdom}
\email{matdw@bristol.ac.uk}
\subjclass[2010]{11P05, 11L15, 11P55}
\keywords{Sums of three cubes, Waring's problem, Weyl sums}
\date{}
\dedicatory{In honorem R. C. Vaughan annos LXX nati}
\begin{abstract} Estimates are provided for $s$th moments of cubic smooth Weyl sums, 
when $4\le s\le 8$, by enhancing the author's iterative method that delivers estimates 
beyond classical convexity. As a consequence, an improved lower bound is presented for 
the number of integers not exceeding $X$ that are represented as the sum of three cubes 
of natural numbers.\end{abstract}
\maketitle

\section{Introduction} A heuristic application of the Hardy-Littlewood (circle) method 
suggests that the set of integers represented as the sum of three cubes of natural 
numbers should have positive density. Although intense effort over the past $75$ years 
has delivered a reasonable approximation to this expectation, an unconditional proof 
remains elusive. However, each phase of progress has been accompanied by technological 
advances of value elsewhere in applications of the circle method, and so even modest 
advances remain of interest. The most recent progress \cite{Woo2000b} hinges on an 
extension of Vaughan's method \cite{Vau1989} utilising smooth numbers, in which 
fractional moments of exponential sums are estimated non-trivially. In this paper, we make 
further progress on sums of three cubes by exploiting a new mean value estimate to 
improve earlier estimates for fractional moments of cubic smooth Weyl sums. Although 
these improvements are modest in scale, such estimates have found many applications 
(see, for example, \cite{BBW1995}, \cite{BW2007}, \cite{BW2015a}), and it seems 
reasonable to expect that our new bounds will also be of considerable utility.\par

We begin with a new lower bound for for the number, $N(X)$, of integers not exceeding 
$X$ which are the sum of three cubes of natural numbers.

\begin{theorem}\label{theorem1.1} One has $N(X)\gg X^\bet$, where 
$\bet=0.91709477$.
\end{theorem}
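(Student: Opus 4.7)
\textbf{Proof plan for Theorem \ref{theorem1.1}.} The plan is to deduce the lower bound on $N(X)$ from the new fractional moment estimates for cubic smooth Weyl sums that form the technical backbone of this paper, following the Vaughan--Wooley strategy pioneered in \cite{Vau1989} and refined in \cite{Woo2000b}. Set $P=X^{1/3}$ and, for a suitable small parameter $\eta>0$, let $R=P^\eta$. I would write $f(\alpha)=\sum_{P<x\le 2P}e(\alpha x^3)$ for the classical cubic Weyl sum and $g(\alpha)=\sum_{x\in\calA(P,R)}e(\alpha x^3)$ for its smooth analogue, where $\calA(P,R)$ denotes the set of $R$-smooth integers in $(P,2P]$. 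Let $r(n)$ count the representations $n=x_1^3+x_2^3+x_3^3$ in which $x_1\in(P,2P]$ and $x_2,x_3\in\calA(P,R)$, and denote by $\calN(X)$ the set of $n\asymp X$ with $r(n)>0$, so that $|\calN(X)|\le N(cX)$ for an absolute constant $c$.

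The first step is the Cauchy--Schwarz bound
\[
\Bigl(\sum_{n}r(n)\Bigr)^2\le |\calN(X)|\sum_{n}r(n)^2,
\]
which reduces the theorem to (i) a lower bound for $\sum r(n)$ of order $P^{3-\eps}\cdot(\text{smooth density})^2$ and (ii) a corresponding upper bound for $\sum r(n)^2=\int_0^1 |f(\alpha)g(\alpha)^2|^2\d\alpha=\int_0^1|f(\alpha)|^2|g(\alpha)|^4\d\alpha$. The lower bound for the linear sum is obtained by restricting to the Hardy--Littlewood major arcs $\grM$, on which the standard singular series and singular integral analysis supplies the expected main term of size $\gg P^3\rho(1/\eta)^2$, where $\rho$ is Dickman's function.

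The decisive step is the upper bound for the second-moment sum, and this is where the new input enters. One splits $\int_0^1 |f|^2|g|^4\d\alpha$ by H\"older's inequality into a product of moments of $g$ of orders in the range $4\le s\le 8$, together with a sixth-moment or eighth-moment of $f$ estimated by the Vaughan--Vinogradov mean value machinery. Precisely, a relation of the shape
\[
\int_0^1|f|^2|g|^4\d\alpha\le \Bigl(\int_0^1|f|^{2u}\d\alpha\Bigr)^{1/u}\Bigl(\int_0^1|g|^{4v}\d\alpha\Bigr)^{1/v}
\]
with $1/u+1/v=1$ is used, and the key $\int_0^1|g|^{s}\d\alpha$-estimates for fractional $s$ supplied by this paper then feed directly into the numerology. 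The exponent $\bet=0.91709477$ arises from a numerical optimisation: one balances $\eta$, the H\"older exponent $u$, and the admissible mean-value exponents to minimise the exponent in the upper bound for $\sum r(n)^2$ relative to $(\sum r(n))^2$.

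The main obstacle, and the reason this paper exists, is to sharpen the bounds for $\int_0^1|g(\alpha)|^s\d\alpha$ in the fractional range $4\le s\le 8$ beyond classical convexity. Once these estimates are in hand, the deduction of Theorem \ref{theorem1.1} is essentially a careful bookkeeping exercise: each improvement in the allowable mean-value exponents translates, via the H\"older split and the Cauchy--Schwarz inequality above, into a proportional improvement in $\beta$. I would therefore defer the analytic heart of the argument to the sections that establish the moment estimates, and reserve this final deduction for a concluding section in which the parameters $\eta$ and $u$ are tuned to yield the numerical value $\bet=0.91709477$.
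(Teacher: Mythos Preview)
Your Cauchy--Schwarz framework is correct and matches the paper: one bounds $N(X)$ from below by $(\sum_n r(n))^2/\sum_n r(n)^2$, obtains the lower bound for $\sum_n r(n)$ from the major arcs, and identifies $\sum_n r(n)^2$ with the mixed sixth moment $\int_0^1|F(\alp;P)^2f(\alp;P,R)^4|\d\alp$ (in the paper's notation; your $f$ and $g$ are the paper's $F$ and $f$).

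The gap is in how you propose to bound this mixed moment. You suggest a H\"older split of the shape
\[
\int_0^1|F|^2|f|^4\d\alp\le \Bigl(\int_0^1|F|^{2u}\d\alp\Bigr)^{1/u}\Bigl(\int_0^1|f|^{4v}\d\alp\Bigr)^{1/v},
\]
separating the classical and smooth sums, and then optimising over $u$ and $\eta$. This is lossy and does not recover $\bet=0.91709477$. For instance, with $u=4$ one has $\int|F|^8\ll P^{5+\eps}$ and $4v=16/3$, and Table~1 gives $\del_{16/3}\approx 0.14$, yielding an effective exponent $\approx 0.355$; with $u=2$ one gets $0.5$; and $u=3$ would require the unknown bound $\int|F|^6\ll P^{3+\eps}$ to do better. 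No admissible choice of $u$ produces $0.24871567$.

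The paper instead estimates the mixed mean value \emph{directly}: Theorem~\ref{theorem1.2} gives $\int_0^1|F|^2|f|^4\d\alp\ll P^{3+\del_6}$ with $\del_6=0.24871567$, proved via a Hardy--Littlewood dissection applied inside this six-variable integral (Lemma~\ref{lemma5.1}). The fractional moments $U_s(P,R)$ from Table~1 feed into that lemma (specifically at $t\approx 5.39$), not into a H\"older decomposition of the final integral. Once Theorem~\ref{theorem1.2} is in hand, the deduction is immediate with no further optimisation: the argument of \cite[\S2]{Woo2000b} gives $N(X)\gg X^{1-\del_6/3-\eps}$, and $1-0.24871567/3=0.91709477\ldots$.
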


Lower bounds for $N(X)$ are at least implicit in work of Hardy and Littlewood 
\cite{HL1925} from 1925. By developing methods based on diminishing ranges and their 
$p$-adic variants, Davenport \cite{Dav1939} established the lower bound 
$N(X)\gg X^{13/15-\eps}$, subsequently obtaining $N(X)\gg X^{47/54-\eps}$ (see 
\cite{Dav1950}). Thirty-five years later, Vaughan \cite{Vau1985}, \cite{Vau1986} 
enhanced these methods, first proving that $N(X)\gg X^{8/9-\eps}$, and later that 
$N(X)\gg X^{19/21-\eps}$. His seminal introduction \cite{Vau1989} of methods utilising 
smooth numbers led to the lower bound $N(X)\gg X^{11/12-\eps}$ (see also Ringrose 
\cite{Rin1986} for an intermediate result). The author's derivation of effective estimates 
for fractional moments of smooth Weyl sums \cite{Woo1995} first delivered a lower 
bound of the shape $N(X)\gg X^{1-\xi/3-\eps}$, where $\xi=0.24956813\ldots$ denotes 
the positive root of the polynomial $\xi^3+16\xi^2+28\xi-8$. Subsequently, the author 
obtained a similar estimate in which $\xi=(\sqrt{2833}-43)/41=0.24941301\ldots$ 
(see \cite{Woo2000b}). With this value of $\xi$, one has $1-\xi/3=0.91686232\ldots $, 
which should be compared with the exponent $0.91709477$ of Theorem 
\ref{theorem1.1}. Subject to the truth of an unproved Riemann Hypothesis concerning 
a certain Hasse-Weil $L$-function, meanwhile, one has the conditional estimate 
$N(X)\gg X^{1-\eps}$ due to Hooley \cite{Hoo1986,Hoo1997} and Heath-Brown 
\cite{HB1998}.\par

Theorem \ref{theorem1.1} follows from an estimate for the sixth moment of a certain 
smooth Weyl sum. Define the set of $R$-smooth numbers of size at most $P$ by
$$\calA(P,R)=\{ n\in [1,P]\cap \dbZ: \text{$p|n$ and $p$ prime}\Rightarrow p\le R\}.$$
Then, with $e(z)=e^{2\pi iz}$, we introduce the smooth and classical Weyl sums
\begin{equation}\label{1.1}
f(\alp;P,R)=\sum_{x\in \calA(P,R)}e(\alp x^3)\quad \text{and}\quad 
F(\alp;P)=\sum_{1\le x\le P}e(\alp x^3).
\end{equation}
In \S7 we establish the mean value estimate contained in the following theorem.

\begin{theorem}\label{theorem1.2} Write $\del_6=0.24871567$. Then there exists a 
positive number $\eta$ with the property that, whenever $R\le P^\eta$, one has
\begin{equation}\label{1.2}
\int_0^1|F(\alp;P)^2f(\alp;P,R)^4|\d\alp \ll P^{3+\del_6}.
\end{equation}
\end{theorem}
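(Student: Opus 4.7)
The plan is to exploit the mean value estimates for the smooth cubic Weyl sum $f(\alp;P,R)$ derived in the preceding sections via the author's iterative method, and to bridge between the mixed moment involving $F(\alp;P)$ and the pure moments of $f(\alp;P,R)$ by means of a Hardy--Littlewood dissection together with Weyl's inequality.

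First I would dissect $[0,1]$ into major arcs $\grM$ (narrow neighbourhoods of rationals $a/q$ with $q\le Q$) and minor arcs $\grm=[0,1]\setminus\grM$ for a suitable $Q$. On $\grM$ both $F(\alp;P)$ and $f(\alp;P,R)$ admit standard approximations by products of complete exponential sums and smooth archimedean integrals, and the convergence of the singular series for sums of three cubes yields $\int_{\grM}|F^2f^4|\d\alp\ll P^{3+\eps}$, safely absorbed into the target bound.

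On $\grm$ the key input is Weyl's inequality $\sup_{\grm}|F(\alp;P)|\ll P^{3/4+\eps}$. I would then apply a H\"older interpolation
\[
\int_{\grm}|F|^2|f|^4\d\alp\le\Bigl(\sup_{\grm}|F|\Bigr)^{2-2\tet}\int_0^1|F|^{2\tet}|f|^4\d\alp,
\]
with a parameter $\tet\in[0,1]$, and then apply H\"older once more to convert $\int|F|^{2\tet}|f|^4\d\alp$ into a product of pure moments of $F$ (controlled by elementary divisor-bound estimates such as $\int|F|^4\d\alp\ll P^{2+\eps}$) and pure moments of $f$ of the form $\int|f|^s\d\alp\ll P^{s-3+\del_s+\eps}$ (taken from the earlier sections, in particular for $s=6$ and $s=8$). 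Optimising over $\tet$ and the subsequent H\"older exponents should yield the exponent $3+\del_6$.

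The main obstacle is the numerical calibration: the naive choice $(\sup_{\grm}|F|)^2\int_0^1|f|^4\d\alp\ll P^{3/2+\eps}\cdot P^{2+\eps}=P^{7/2+\eps}$ falls short of the target $P^{3+\del_6}$ by a factor of $P^{1/4}$, so one must use the full strength of the sharper smooth-moment estimates $\int|f|^s\d\alp$ for $s\ge 6$ derived in the earlier sections, combined perhaps with a finer subdivision of $\grm$ into intermediate arcs carrying improved pointwise control on $F(\alp;P)$. It is this fine-tuning, rather than the overall strategy, that constitutes the heart of the argument in \S7 and produces the declared value $\del_6=0.24871567$.
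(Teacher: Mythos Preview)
Your major-arc sketch is fine, but the minor-arc strategy has a genuine gap: no combination of Weyl's inequality $\sup_{\grm}|F(\alp;P)|\ll P^{3/4+\eps}$ with H\"older interpolation against pure moments of $F$ and $f$ can reach $P^{3+\del_6}$ with $\del_6<\tfrac14$. If you run your scheme with the only non-circular inputs available (Hua's $\int|F|^4\ll P^{2+\eps}$, Vaughan's $\del_6=\tfrac14$, and convexity), every choice of your parameter $\tet$ gives at best $P^{10/3+\eps}$ on the minor arcs; even granting yourself the paper's final value $\del_t\approx 0.1496$ at $t\approx 5.39$ as a black box, the same interpolation is still stuck above $P^{3.33}$. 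The shortfall is structural, not a matter of ``fine-tuning''.

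What the paper actually does is entirely different. It never applies Weyl's inequality to $F(\alp;P)$. Instead, it invokes the differencing identity from \cite[inequality (5.3)]{Woo1995}, which yields
\[
\int_0^1|F(\alp;P)^2f(\alp;P,R)^4|\d\alp \ll P^\eps M^3\bigl(PMQ^2+I([0,1))\bigr),
\]
where $M=P^\phi$, $Q=P^{1-\phi}$, and $I(\grB)=\int_\grB|F_1(\alp)f(\alp;2Q,R)^4|\d\alp$ involves the auxiliary three-variable sum $F_1(\alp)=\sum_{z,h,m}e(2\alp h(3z^2+h^2m^6))$. Two features of this transformation are essential and absent from your plan: first, the minor-arc bound $|F_1(\alp)|\ll P^\eps(PM)^{1/2}H$ saves a full square-root over the trivial bound, far more than the $P^{1/4}$ Weyl saving on $F$; second, the smooth sum now lives at the \emph{smaller} scale $Q$, so that $U_t(2Q,R)\ll Q^{t/2+\del_t+\eps}$ feeds the iteration. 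This is the content of Lemma~\ref{lemma5.1}. The declared value $\del_6=0.24871567$ then arises by choosing $\phi$ and $t\approx 5.393$ optimally in that lemma, with $\del_t$ supplied by the iterative computations of \S7; the latter are indeed numerical bookkeeping, but the engine is Lemma~\ref{lemma5.1} and the underlying differencing, not a refined Hardy--Littlewood dissection of the original integral.
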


For comparison, \cite[Theorem 1.2]{Woo2000b} yields a similar estimate with 
$\del_6=0.24941301\ldots $, whilst the earlier work of Vaughan \cite{Vau1989} 
provides an analogous sixth moment estimate for $f(\alp;P,R)$ with associated exponent 
$\del_6=\tfrac{1}{4}+\eps$, for any $\eps>0$. Note that in many applications 
(see \cite{BW2007, BW2015a, BW2015b}), it is crucial that (\ref{1.2}) hold with 
$\del_6<\tfrac{1}{4}$, hence the significance of Theorem \ref{theorem1.2}.\par

The bound (\ref{1.2}) of Theorem \ref{theorem1.2} leads to improvement in estimates 
associated with the unrepresentation theory of Waring's problem for cubes. Let $E_s(X)$ 
denote the number of integers not exceeding $X$ which are {\it not} the sum 
of $s$ cubes of natural numbers. Then the arguments of Br\"udern \cite{Bru1991} 
and Kawada and Wooley \cite{KW2010} lead to the estimates recorded in the following 
theorem.

\begin{theorem}\label{theorem1.3} Write 
$\tau=\tfrac{2}{7}\left( \tfrac{1}{4}-0.24871567\right) =1/2725.15\dots $. Then 
one has
$$E_4(X)\ll X^{37/42-\tau},\quad E_5(X)\ll X^{5/7-\tau},\quad 
E_6(X)\ll X^{3/7-2\tau}.$$
\end{theorem}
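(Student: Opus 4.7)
\emph{Proof proposal.} The strategy is direct: the arguments of Br\"udern \cite{Bru1991} and Kawada-Wooley \cite{KW2010} produce bounds for $E_s(X)$ whose final exponents are explicit affine functions of the sixth-moment exponent $\del_6$ of Theorem \ref{theorem1.2}. The task reduces to substituting the new admissible value $\del_6=0.24871567$ into those formulas and checking that no other exponent in those arguments is disturbed.

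The skeleton of both frameworks is familiar. For $n$ in the putative exceptional set, one counts weighted representations $n=x_1^3+\cdots+x_s^3$ via a hybrid generating function mixing the classical sum $F(\alp;P)$ and the smooth sum $f(\alp;P,R)$. A Bessel-type argument together with standard major arc asymptotics reduces the cardinality of the exceptional set to a minor-arc mean value of the shape
$$\int_\grm|F(\alp;P)^{a}\,f(\alp;P,R)^{b}|\,\d\alp,\qquad a+b=2s,$$
which is then dissected via H\"older's inequality. The sixth moment $\int_0^1|F^2 f^4|\,\d\alp\ll P^{3+\del_6}$ supplied by Theorem \ref{theorem1.2} enters as one of the interpolation factors, while Weyl-type pointwise bounds on $\grm$ and the classical eighth moments furnish the other. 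A direct optimisation within each framework shows that the contribution of $\del_6$ to the final $E_s(X)$ exponent is linear, with slope $\tfrac{2}{7}$ in Br\"udern's dissection for $s=4,5$ and with slope $\tfrac{4}{7}$ in the Kawada-Wooley dissection for $s=6$.

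Both reference papers present their conclusions under the convexity value $\del_6=\tfrac{1}{4}$, yielding the published exponents $37/42$, $5/7$, and $3/7$. Feeding in instead the sharper value $\del_6=0.24871567$ from Theorem \ref{theorem1.2} therefore produces the saving $\tau=\tfrac{2}{7}(\tfrac{1}{4}-0.24871567)$ for $E_4$ and $E_5$, and $2\tau$ for $E_6$, which is precisely the content of Theorem \ref{theorem1.3}.

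The work, rather than a genuine obstacle, lies in bookkeeping: one must verify that every auxiliary ingredient of \cite{Bru1991} and \cite{KW2010}---Weyl bounds, pruning estimates, singular series positivity, and the companion moments $\int|F|^8$ and $\int|f|^8$---is independent of $\del_6$, so that the dependence of the final $E_s$ exponent on $\del_6$ is genuinely affine with the claimed slopes. Since all of these auxiliary estimates are either classical or already recorded in the smooth Weyl sum literature in $\del_6$-free form, this verification is routine and is the only non-trivial step of the proof.
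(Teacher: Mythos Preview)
Your proposal is correct and matches the paper's own treatment: the paper explicitly declines to give a proof of Theorem \ref{theorem1.3}, stating only that it is routine and that the bound of Theorem \ref{theorem1.2} is the key input into the methods of \cite{Bru1991} and \cite{KW2010}. Your sketch supplies more detail than the paper does, and the claimed slopes $2/7$ and $4/7$ are consistent with the form of $\tau$ in the statement; the only minor inaccuracy is historical rather than mathematical---\cite{KW2010} already incorporated the value $\del_6=0.24941301\ldots$ from \cite{Woo2000b} rather than $\del_6=\tfrac14$, which is why the paper compares its $\tau$ to the earlier $\tau\approx 1/5962$.
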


The aforementioned work of Br\"udern \cite{Bru1991} yields the bound 
$E_4(X)\ll X^{37/42+\eps}$, whilst Kawada and Wooley \cite[Theorem 1.4]{KW2010} 
obtain a conclusion similar to that of Theorem \ref{theorem1.3}, though with $\tau$ 
slightly smaller than $1/5962$. We will not discuss the (routine) proof of Theorem 
\ref{theorem1.3} further here, noting merely that the conclusion of Theorem 
\ref{theorem1.2} is the key input into the methods of \cite{Bru1991}.\par

We establish Theorem \ref{theorem1.2} as a consequence of estimates for the mean 
values
\begin{equation}\label{1.3}
U_s(P,R)=\int_0^1|f(\alp;P,R)|^s\d\alp ,
\end{equation}
with $4\le s\le 8$. The iterative method of \cite{Woo1995} obtains a bound for $U_s(P,R)$ 
in terms of corresponding bounds for $U_{s-2}(P,R)$ and $U_t(P,R)$, wherein $t$ is a 
parameter to be chosen with $\tfrac{4}{3}(s-2)\le t\le 2(s-2)$. A key player in 
determining the strength of these estimates is an exponential sum of the shape
$$\Ftil_1(\alp)=\sum_{\substack{u\in \calA(P^\tet R,R)\\ u>P^\tet}}
\sum_{\substack{z_1,z_2\in \calA(P,R)\\ z_1\equiv z_2\mmod{u^3}\\ z_1\ne z_2}}e(\alp 
u^{-3}(z_1^3-z_2^3)),$$
in which $\tet$ is a parameter with $0\le \tet\le \tfrac{1}{3}$. This exponential sum is 
made awkward to handle by the constraint that the summands $z_1$ and $z_2$ be 
smooth. In this paper we estimate the auxiliary integral
$$\int_0^1\Ftil_1(\alp)|f(\alp;P^{1-\tet},R)|^{s-2}\d\alp $$
in terms of the mediating mean value
$$\int_0^1|\Ftil_1(\alp)^2f(\alp;P^{1-\tet},R)^2|\d\alp .$$
By orthogonality, the latter counts the number of solutions of an underlying Diophantine 
equation. By discarding the smoothness constraint implicit in the sum $\Ftil_1(\alp)$, much 
of the strength of the Hardy-Littlewood method may be preserved in the ensuing minor 
arc estimate. After preparing an auxiliary estimate in \S2, we analyse this new mean value 
in \S3, and indicate in \S4 how it may be utilised in the method of \cite{Woo1995}. Ideas 
relevant for the estimation of the mean value $U_s(P,R)$ when $s=6$, and when $s>6.5$, 
are presented in \S5.\par

The Keil-Zhao device (see \cite[page 608]{Kei2014} and the discussion leading to 
\cite[equation (3.10)]{Zha2014}) enables us in \S6 to obtain stronger minor arc estimates 
for smooth Weyl sums than available hitherto. When $\grm\subseteq [0,1)$, $0<t\le 2$ and 
$s\ge 6$, this idea delivers an estimate of the shape
$$\int_\grm |f(\alp;P,R)|^{s+t}\d\alp \ll P^{t/2}\Bigl( \sup_{\alp \in \grm}|F(\alp;P)|
\Bigr)^{t/2}\int_0^1|f(\alp;P,R)|^s\d\alp ,$$
in place of
$$\int_\grm |f(\alp;P,R)|^{s+t}\d\alp \ll \Bigl( \sup_{\alp \in \grm}|f(\alp;P,R)|\Bigr)^t
\int_0^1|f(\alp;P,R)|^s\d\alp .$$
The ease with which classical Weyl sums can be estimated on sets of minor arcs ensures 
that this device is of utility when $s$ lies between $6$ and $8$. In particular, in \S7 we 
explain how to improve \cite[Theorem 2]{BW2001}, which establishes that when $R$ is a 
small enough power of $P$, then $U_s(P,R)\ll P^{s-3}$ for $s\ge 7.691$.

\begin{theorem}\label{theorem1.4} Suppose that $\eta>0$ and $P$ is sufficiently large in 
terms of $\eta$, and further that $R\le P^\eta$. Then provided that $s\ge 7.5906$, one 
has
$$\int_0^1|f(\alp;P,R)|^s\d\alp \ll P^{s-3}.$$
\end{theorem}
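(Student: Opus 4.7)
The plan is to follow the strategy of Theorem~2 of \cite{BW2001}, with the key improvements coming from the Keil-Zhao device of \S6 and the sharpened sixth moment estimate underlying Theorem~\ref{theorem1.2}.

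I would dissect $[0,1)$ via Hardy-Littlewood arcs into narrow major arcs $\grM_0=\grM(P^{\nu_0})$, intermediate arcs $\grM_1=\grM(P^{\nu_1})\setminus \grM_0$, and minor arcs $\grm=[0,1)\setminus \grM(P^{\nu_1})$, with parameters $\nu_0<\nu_1$ to be optimized, and write $s=6+t$ with $0<t\le 2$. On the minor arcs, applying the Keil-Zhao estimate
\begin{equation*}
\int_\grm |f(\alp;P,R)|^{6+t}\d\alp \ll P^{t/2}\Bigl( \sup_{\alp \in \grm}|F(\alp;P)|\Bigr)^{t/2}U_6(P,R)
\end{equation*}
with the bound $U_6(P,R)\ll P^{3+\del_6}$ derived from the machinery of Theorem~\ref{theorem1.2}, where $\del_6=0.24871567$, and a Weyl-type estimate for $\sup_\grm|F(\alp;P)|$ tailored to the chosen $\nu_1$, yields $\int_\grm|f|^{6+t}\d\alp \ll P^{3+t}$ for $t$ in the admissible range.

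For the intermediate arcs, I would exploit the pointwise bound $|f(\alp;P,R)|\ll P(q+P^3|q\alp-a|)^{-1/3+\eps}$ on each arc $\grM(q,a)\subset \grM_1$, coupled with the auxiliary moment inputs $U_4(P,R)\ll P^{2+\eps}$ and $U_6(P,R)\ll P^{3+\del_6}$, to prune the contribution layer by layer down to $\ll P^{3+t}$. The narrow major arc contribution $\int_{\grM_0}|f|^{6+t}\d\alp\ll P^{3+t}$ is then handled by the standard Hardy-Littlewood asymptotic formula together with the convergent singular series and singular integral.

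The principal task is the parameter optimization: $\nu_0$ and $\nu_1$ must be chosen so that the three estimates cohere, which is delicate because the Weyl sup bound $\sup_\grm|F(\alp;P)|$ saturates at $P^{3/4+\eps}$ unless a finer subdivision of $\grm$ (using the sharper Weyl bound $|F(\alp;P)|\ll P^{1+\eps}(q/P^3)^{1/4}$ for large denominators $q$) is employed. With the new value $\del_6=0.24871567$ in hand, the full optimization delivers the claimed threshold $s\ge 7.5906$, improving on $s\ge 7.691$ of \cite{BW2001} by an amount essentially proportional to the reduction in $\del_6$ from $1/4$ to $0.24871567$. The main obstacle is precisely this optimization—ensuring that the major arc pruning remains viable for the (somewhat wide) value of $\nu_1$ forced by the Weyl saturation on the minor arcs, so that the cost of making room for the Keil-Zhao gain does not overwhelm the savings.
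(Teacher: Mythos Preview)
Your plan has a quantitative gap: feeding only the sixth moment into the Keil-Zhao device cannot reach the threshold $7.5906$. With $s=6$ in your displayed inequality, the best available Weyl bound on any reasonable minor arc set is $\sup_\grm|F(\alp;P)|\ll P^{3/4+\eps}$, so
\[
\int_\grm |f(\alp;P,R)|^{6+t}\d\alp \ll P^{t/2}\cdot P^{3t/8+\eps}\cdot P^{3+\del_6}=P^{3+\del_6+7t/8+\eps},
\]
and the constraint $\del_6+7t/8\le t$ forces $t\ge 8\del_6\approx 1.99$, i.e.\ a threshold near $7.99$. No finer dissection of $\grm$ helps here, since $P^{3/4}$ is the genuine size of the classical cubic Weyl sum on generic minor arcs. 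Your remark that the gain over $7.691$ is ``essentially proportional to the reduction in $\del_6$ from $1/4$ to $0.24871567$'' is also off by an order of magnitude: that reduction is about $0.00128$, whereas the actual improvement is about $0.10$.

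What the paper does instead is apply the Keil-Zhao machinery (packaged as Lemma~\ref{lemma6.1}) not at $s=6$ but at an intermediate value $s=7.1$, using the admissible exponent $\Del_{7.1}=0.06131437$ from Table~1. Lemma~\ref{lemma6.1} then yields the bound $\ll P^{u-3}$ for all $u>7.1+8\Del_{7.1}=7.5905\ldots$. The point is that the full iterative computation of Theorem~\ref{theorem1.5} produces admissible exponents $\Del_s$ for $6<s<8$ that decay much faster than linear interpolation from $\Del_6$ would suggest, and it is this intermediate moment---not the sixth---that must be inserted into the Keil-Zhao step. Your outline omits this essential input.
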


Our estimates for the mean values $U_s(P,R)$ depend on those for $U_t(P,R)$ for 
appropriate choices of $t$. In \S7, we describe how computations associated with this 
complicated iteration were performed, and discuss the extent to which the computed 
exponents reflect the sharpest available from this circle of ideas. These conclusions are 
summarised in the following theorem.

\begin{theorem}\label{theorem1.5}
Let $(s,\del_s,\Del_s)$ be a triple listed in Table 1. Suppose that $\eta>0$ and $P$ is 
sufficiently large in terms of $\eta$, and further that $R\le P^\eta$. Then
$$\int_0^1|f(\alp;P,R)|^s\d\alp \ll P^{s/2+\del_s}\quad \text{and}\quad 
\int_0^1|f(\alp;P,R)|^s\d\alp \ll P^{s-3+\Del_s}.$$
\end{theorem}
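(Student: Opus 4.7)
The plan is to feed the enhancements of \S\S2--6 into the iterative machinery of \cite{Woo1995} and carry out the associated numerical optimisation. The basic recurrence bounds $U_s(P,R)$ in terms of $U_{s-2}(P,R)$ and $U_t(P,R)$, where $t$ ranges over $[\tfrac{4}{3}(s-2),2(s-2)]$ and a parameter $\tet\in[0,\tfrac13]$ controls the split through the auxiliary sum $\Ftil_1(\alp)$. The new ingredient furnished in \S\S2--3 is that the awkward integral $\int_0^1\Ftil_1(\alp)|f(\alp;P^{1-\tet},R)|^{s-2}\d\alp$ is estimated by passage to the mediating mean value $\int_0^1|\Ftil_1(\alp)^2f(\alp;P^{1-\tet},R)^2|\d\alp$, in which the smoothness constraint on the variables of $\Ftil_1$ may be discarded and the full force of the Hardy--Littlewood method brought to bear. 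Combined with the minor arc treatment of \S6, which via the Keil--Zhao device trades $(\sup_{\alp\in\grm}|f(\alp;P,R)|)^t$ for the stronger $P^{t/2}(\sup_{\alp\in\grm}|F(\alp;P)|)^{t/2}$ whenever $0<t\le 2$ and $s\ge 6$, each admissible triple $(s,\tet,t)$ yields an inequality of the shape
$$U_s(P,R)\ll P^{\lam_0}+P^{\lam_1(s,\tet,t)}U_{s-2}(P,R)^{a_1}U_t(P,R)^{a_2},$$
with explicit exponents $\lam_0,\lam_1,a_1,a_2$.

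With this recurrence in hand, I would pin the iteration at its upper end using Theorem \ref{theorem1.4}: for $s\ge 7.5906$ one has $U_s(P,R)\ll P^{s-3}$, so $\Del_s=0$ and $\del_s=3-s/2$ are admissible there. From this ceiling I would propagate downward on a fine grid of values of $t$ covering the whole range relevant to the recurrence, recording the currently best admissible exponent at each gridpoint. Substituting these into the displayed inequality and minimising over $\tet$ and $t$ for each target $s$ produces an updated exponent $\del_s^{\mathrm{new}}$, and sweeping until a fixed point is attained (to the required precision) yields the entries of Table 1. Since the two conclusions coincide precisely when $\Del_s=\del_s+3-s/2$, the second column is a rewriting of the first.

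The principal obstacle is organisational and numerical rather than conceptual. Because the recurrence at integer $s$ calls on $U_t$ at essentially arbitrary real $t$, a continuum of provisional exponents must be tracked consistently: any inconsistency on the grid, or any coarseness in the $\tet$-minimisation, will propagate and degrade the computed fixed point. The practical strategy is to discretise $t$ finely, to tabulate the current best exponent at every gridpoint, and to perform the $\tet$-minimisation by a mixture of symbolic differentiation (where the inequality is explicit enough) and numerical search, iterating the whole sweep until the table stabilises. The values $s=6$ and $s=8$ warrant separate hand-checking, both because $\del_6$ is precisely the input needed for Theorem \ref{theorem1.2} and because $s=8$ sits just beneath the range covered by Theorem \ref{theorem1.4}; the optimising parameters $(\tet,t)$ should be recorded at each tabulated $s$ so that the entries of Table 1 are reproducible from the displays of \S\S3--6.
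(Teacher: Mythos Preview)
Your overall architecture---seed the table with known exponents, feed the recursion of \S\S4--5 and the Keil--Zhao input of \S6, and iterate to a fixed point on a fine grid---is exactly what the paper does. But there is a genuine logical gap in your choice of seed: you propose to ``pin the iteration at its upper end using Theorem~\ref{theorem1.4}''. This is circular. In the paper, Theorem~\ref{theorem1.4} is deduced \emph{from} Table~1 (specifically, from the admissible exponent $\Del_{7.1}=0.06131437$ fed into Lemma~\ref{lemma6.1}); it is an output of the iteration, not an input. The correct seeds are unconditional: Hua's lemma gives $\del_4=0$ and $\del_s=\tfrac{s}{2}-3$ for $s\ge 8$, Vaughan \cite{Vau1989} gives $\del_6=\tfrac14$, and convexity fills in the initial array $\del_s=\max\{0,\tfrac18(s-4),\tfrac38 s-2,\tfrac12 s-3\}$. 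What you may legitimately use from \S6 inside the iteration is Lemma~\ref{lemma6.1} itself (the paper's Process~$W_s$), which is a conditional statement of the form ``if $\Del_s$ is admissible and $u>s+8\Del_s$, then $\mu_u=u-3$ is permissible''; this can be applied at each sweep with the current provisional $\Del_s$ without circularity.

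Two smaller points. First, the iteration is not a one-way ``downward propagation'': for $4<s\le 6$ the relevant $t$ in Lemma~\ref{lemma4.2} (and in Lemma~\ref{lemma5.1} for $s=6$) can exceed $s$, so the dependencies run in both directions and a genuine fixed-point sweep over the whole range is required, as you acknowledge later. Second, your displayed recurrence $U_s\ll P^{\lam_0}+P^{\lam_1}U_{s-2}^{a_1}U_t^{a_2}$ oversimplifies Lemma~\ref{lemma4.1}, where $\mu_s$ appears on both sides; the paper resolves this self-reference (via the parameter $D$) to produce the explicit formula for $\tet_0$ in Lemma~\ref{lemma4.2}, and it is that formula (together with its specialisations in Corollaries~\ref{corollary4.3}--\ref{corollary4.5}, Lemmata~\ref{lemma5.1}--\ref{lemma5.3}, and Lemma~\ref{lemma6.1}) that one actually iterates.
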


Exponents may be derived for values of $s$ between those in the table by linear 
interpolation using H\"older's inequality. Values of $\del_s$ and $\Del_s$ computed in \S7 
have been rounded up, as appropriate, in the final decimal place recorded.\par

\begin{table} 
    \begin{tabular}{ | l | l | l || l | l | l | p{5cm} |}
    \hline
    \ $s$ & \ \ \ \ \ \ $\del_s$ & \ \ \ \ \ $\Del_s$ & \ $s$ & \ \ \ \ \ \ $\del_s$ & 
\ \ \ \ \ $\Del_s$ \\ \hline
    4.0&0.00000000&1.00000000&6.0&0.24871567&0.24871567\\ \hline
    4.1&0.00130000&0.95130000&6.1&0.27667792&0.22667792\\ \hline
    4.2&0.00495852&0.90495852&6.2&0.30598066&0.20598066\\ \hline
    4.3&0.01069296&0.86069296&6.3&0.33718632&0.18718632\\ \hline
    4.4&0.01811263&0.81811263&6.4&0.36984515&0.16984515\\ \hline
    4.5&0.02685074&0.77685074&6.5&0.40263501&0.15263501\\ \hline
    4.6&0.03754195&0.73754195&6.6&0.43542486&0.13542486\\ \hline
    4.7&0.04903470&0.69903470&6.7&0.46851012&0.11851012\\ \hline
    4.8&0.06130069&0.66130069&6.8&0.50330866&0.10330866\\ \hline
    4.9&0.07426685&0.62426685&6.9&0.53863866&0.08863866\\ \hline
    5.0&0.08780854&0.58780854&7.0&0.57423853&0.07423853\\ \hline
    5.1&0.10328796&0.55328796&7.1&0.61131437&0.06131437\\ \hline
    5.2&0.11894874&0.51894874&7.2&0.64881437&0.04881437\\ \hline
    5.3&0.13477800&0.48477800&7.3&0.68631437&0.03631437\\ \hline
    5.4&0.15076406&0.45076406&7.4&0.72381437&0.02381437\\ \hline
    5.5&0.16689626&0.41689626&7.5&0.76131437&0.01131437\\ \hline
    5.6&0.18316493&0.38316493&7.6&0.80000000&0.00000000\\ \hline
    5.7&0.19954296&0.34954296&7.7&0.85000000&0.00000000\\ \hline 
    5.8&0.21593386&0.31593386&7.8&0.90000000&0.00000000\\ \hline
    5.9&0.23232477&0.28232477&7.9&0.95000000&0.00000000\\ \hline    
\end{tabular}
\vskip.2cm
\caption{Associated and permissible exponents for $4\le s\le 8$.}
\end{table}

In this paper, we adopt the convention that whenever $\eps$, $P$ or $R$ appear in a 
statement, either implicitly or explicitly, then for each $\eps>0$, there exists a 
positive number $\eta=\eta(\eps)$ such that the statement holds whenever $R\le P^\eta$ 
and $P$ is sufficiently large in terms of $\eps$ and $\eta$. Implicit constants in 
Vinogradov's notation $\ll$ and $\gg$ will depend at most on $\eps$ and $\eta$. Since our 
iterative methods involve only a finite number of statements (depending at most on 
$\eps$), there is no danger of losing control of implicit constants. Finally, write 
$\|\tet\|=\underset{y\in\dbZ}{\min}|\tet-y|$.

\section{An auxiliary mean value estimate} Before announcing our pivotal mean value 
estimate, we introduce some notation. Let $\phi$ be a real number with 
$0\le \phi\le \tfrac{1}{3}$, and write
\begin{equation}\label{2.1}
M=P^\phi,\quad H=PM^{-3}\quad \text{and}\quad Q=PM^{-1}.
\end{equation}
Define the exponential sums
\begin{equation}\label{2.2}
F_1(\alp)=\sum_{1\le z\le 2P}\sum_{1\le h\le H}\sum_{M<m\le MR}
e(2\alp h(3z^2+h^2m^6)),
\end{equation}
$$D(\alp)=\sum_{1\le h\le H}\biggl| \sum_{1\le z\le 2P}e(6\alp hz^2)\biggr|^2$$
and
\begin{equation}\label{2.3}
E(\alp)=\sum_{1\le h\le H}\biggl| \sum_{M<m\le MR}e(2\alp h^3m^6)\biggr|^2.
\end{equation}
Also, when $\grB\subseteq [0,1)$, we introduce the mean value
\begin{equation}\label{2.4}
\Ups(P,R;\phi;\grB)=\int_\grB|F_1(\alp)^2f(\alp;2Q,R)^2|\d\alp ,
\end{equation}
and then write $\Ups(P,R;\phi)=\Ups(P,R;\phi;[0,1))$. We observe that an application of 
Cauchy's inequality to (\ref{2.2}) yields the bound $|F_1(\alp)|^2\le D(\alp)E(\alp)$. 
Consequently, when $t\ge 2$, we obtain the estimate
\begin{equation}\label{2.5}
\Ups(P,R;\phi;\grB )\le \int_\grB \left( D(\alp)E(\alp)\right)^{2/t}|F_1(\alp)|^{2-4/t}
|f(\alp;2Q,R)|^2\d\alp .
\end{equation}

Recall the definition (\ref{1.3}) of the mean value $U_s(P,R)$. We say that an exponent 
$\mu_s$ is {\it permissible} whenever it has the property that, with the notational 
conventions introduced above, one has $U_s(P,R)\ll P^{\mu_s+\eps}$. It follows that, for 
each  positive number $s$, a permissible exponent $\mu_s$ exists with 
$s/2\le \mu_s\le s$. We refer to the exponent $\del_s$ as {\it associated} when 
$\mu_s=s/2+\del_s$ is permissible, and $\Del_s$ as {\it admissible} when 
$\mu_s=s-3+\Del_s$ is permissible.\par

We require a Hardy-Littlewood dissection. Let $\grm$ denote the set of points 
$\alp\in [0,1)$ with the property that, whenever there exist $a\in \dbZ$ and $q\in \dbN$ 
with $(a,q)=1$ and $|q\alp-a|\le PQ^{-3}$, then one has $q>P$. Further, let 
$\grM=[0,1)\setminus \grm$.

\begin{lemma}\label{lemma2.1} Suppose that $t\ge 4$ and $0\le \phi\le \tfrac{1}{3}$. 
Then whenever $\del_t$ is an associated exponent, one has
$$\Ups(P,R;\phi;\grm)\ll P^{1+\eps}MH^{1+2/t}Q^{1+2\del_t/t}.$$
\end{lemma}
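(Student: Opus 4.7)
My plan combines the auxiliary bound (2.5) with H\"older's inequality, the permissibility hypothesis on $U_t(2Q,R)$, and a Weyl-type minor arc bound for the quadratic exponential sum embedded in $D(\alp)$.

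First, I would apply H\"older's inequality to the right-hand side of (2.5) with conjugate exponents $t/(t-2)$ and $t/2$, placing the factor $|f(\alp;2Q,R)|^2$ in the $t/2$-leg. Since $(2-4/t)\cdot t/(t-2)=2$, this yields
$$\int_\grm (DE)^{2/t}|F_1|^{2-4/t}|f|^2\,\d\alp \le \Bigl(\int_\grm (DE)^{2/(t-2)}|F_1|^2\,\d\alp\Bigr)^{(t-2)/t}\Bigl(\int_0^1|f|^t\,\d\alp\Bigr)^{2/t}.$$
By the hypothesis that $\del_t$ is an associated exponent, the rightmost factor is bounded by $Q^{1+2\del_t/t+\eps}$, accounting for the $Q$-dependence of the target.

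Second, I would bound the complementary factor $\bigl(\int_\grm(DE)^{2/(t-2)}|F_1|^2\,\d\alp\bigr)^{(t-2)/t}$ by combining a Diophantine mean value estimate with a pointwise minor arc bound on $D(\alp)$. The mean value $\int_0^1 D(\alp)E(\alp)\,\d\alp$, by orthogonality, equals the number of integer solutions of
$$3h_1(z_1^2-z_2^2)=h_2^3(m_2^6-m_1^6),\quad h_i\in[1,H],\ z_i\in[1,2P],\ m_i\in(M,MR].$$
The diagonal $z_1=z_2$, $m_1=m_2$ contributes $O(H^2PM^{1+\eps})$, and the off-diagonal is controlled by a divisor bound on the number of representations $z_1^2-z_2^2=n$, using the inequality $PM^{-1}R^{-2}\gg 1$ valid when $0\le\phi\le\tfrac{1}{3}$ and $R\le P^\eta$.

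Third, to supply the additional $H^{1-2/t}$ saving beyond the mean value contribution, the minor arc condition is exploited pointwise. The hypothesis $q>P$ in the definition of $\grm$, together with the parameter relations $H=PM^{-3}$ and $Q=PM^{-1}$, ensures that $6h\alp$ admits no rational approximation $a'/q'$ with $q'\le P$ and $|q'\cdot 6h\alp-a'|\le P^{-1}$, uniformly for $h\in[1,H]$. Weyl's inequality for quadratic exponential sums then gives $\bigl|\sum_{z\le 2P}e(6\alp hz^2)\bigr|\ll P^{1/2+\eps}$, whence $D(\alp)\ll HP^{1+\eps}$ for $\alp\in\grm$. Combined with the trivial bound $E(\alp)\ll HM^{2+\eps}$ and the Cauchy inequality $|F_1|^2\le DE$, this delivers the pointwise minor arc savings needed to close the argument.

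The principal obstacle is the delicate balancing of these ingredients. Since the smoothness constraint on the $m$-sum inside $E(\alp)$ precludes a direct Weyl-type bound on $E$, the minor arc savings must be extracted entirely from the quadratic sum in $D$, and balanced against the trivial bound on $E$ and the diagonal $H^2$ from the Diophantine count. The H\"older exponents, together with the precise split between pointwise and on-average parts of $DE$, must be tuned so that the combined powers of $P$, $M$, and $H$ reproduce exactly the target $P^{1+\eps}MH^{1+2/t}Q^{1+2\del_t/t}$; the hypothesis $t\ge 4$ is essential here, ensuring that the exponent $2-4/t\ge 1$ of $|F_1|$ in (2.5) allows the Cauchy bound to be invoked without creating fractional powers that outweigh the Weyl savings.
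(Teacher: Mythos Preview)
Your initial H\"older step is fine, and the pointwise bound $\sup_{\alp\in\grm}D(\alp)\ll P^{1+\eps}H$ is correct (though it follows more cleanly from \cite[Lemma 3.1]{Vau1989} applied to $\alp$ than from Weyl's inequality for each $6h\alp$, since multiplication by $6h$ can alter the Diophantine condition). The gap lies in your treatment of $\int_\grm (DE)^{2/(t-2)}|F_1|^2\,\d\alp$: replacing $|F_1|^2$ by $DE$ via Cauchy and then combining your mean value $\int_0^1 DE\,\d\alp\ll H^2PM^{1+\eps}$ with $\sup_\grm DE\ll H^2PM^{2+\eps}$ gives, after the $(t-2)/t$-th power, only $P^{1+\eps}H^2M^{1+2/t}$, which exceeds the target $P^{1+\eps}MH^{1+2/t}$ by the factor $H^{1-2/t}M^{2/t}=P^{(1-2/t)-\phi(3-8/t)}\ge P^{2/(3t)}$ throughout $0\le\phi\le\tfrac{1}{3}$. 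The Cauchy step decouples the $z$- and $m$-sums inside $F_1$, forgetting that they share a common summation variable $h$; this costs a full factor of $H$ at the level of second moments, and no rebalancing of your listed ingredients recovers it.

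The paper avoids this loss by retaining $F_1$. After pulling out $(\sup_\grm D)^{2/t}$ it applies a three-way H\"older to obtain $\Ups(P,R;\phi;\grm)\le (\sup_\grm D)^{2/t}I_1^{2/t}I_2^{1-4/t}U_t(2Q,R)^{2/t}$, with $I_2=\int_0^1|F_1|^2\,\d\alp\ll P^{1+\eps}MH$ taken from \cite[Lemma 2.3]{Vau1989}, and $I_1=\int_0^1 E|F_1|^2\,\d\alp\ll P^{1+\eps}M^2H^2$ established by a direct Diophantine count of solutions to
\[
h_0^3(n_1^6-n_2^6)=h_1(3z_1^2+h_1^2m_1^6)-h_2(3z_2^2+h_2^2m_2^6),
\]
split into three cases according to whether $n_1=n_2$, whether $h_1z_1^2\ne h_2z_2^2$, or neither. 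These two mean values, rather than $\int_0^1 DE\,\d\alp$, are the inputs you need.
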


\begin{proof} We ultimately work outside the range $0\le \phi\le \tfrac{1}{7}$ in which 
the estimate
$$\sup_{\alp \in \grm}|F_1(\alp)|\ll P^\eps (PM)^{1/2}H$$
follows from \cite[Lemmata 3.1 and 3.4]{Vau1989}, and so we engineer a hybrid method 
combining elements of the Hardy-Littlewood method with a Diophantine interpretation of 
auxiliary equations. We begin by applying H\"older's inequality to (\ref{2.5}), obtaining 
the bound
\begin{equation}\label{2.6}
\Ups(P,R;\phi;\grm )\le \Bigl( \sup_{\alp\in \grm}D(\alp)\Bigr)^{2/t}I_1^{2/t}I_2^{1-4/t}
U_t(2Q,R)^{2/t},
\end{equation}
where $U_t(2Q,R)$ is defined via (\ref{1.3}),
\begin{equation}\label{2.7}
I_1=\int_0^1E(\alp)|F_1(\alp)|^2\d\alp \quad \text{and}\quad 
I_2=\int_0^1|F_1(\alp)|^2\d\alp .
\end{equation}

\par The estimates
\begin{equation}\label{2.8}
I_2\ll P^{1+\eps}MH\quad \text{and}\quad U_t(2Q,R)\ll Q^{t/2+\del_t+\eps}
\end{equation}
follow, respectively, from \cite[Lemma 2.3]{Vau1989} with $j=1$ and the definition of an 
associated exponent. Also, given $\alp\in [0,1)$, we find from \cite[Lemma 3.1]{Vau1989} 
that whenever $a\in \dbZ$ and $q\in \dbN$ satisfy $(a,q)=1$ and $|\alp-a/q|\le q^{-2}$, 
then
\begin{equation}\label{2.9}
D(\alp)\ll P^\eps \biggl( \frac{P^2H}{q+Q^3|q\alp-a|}+PH+q+Q^3|q\alp-a|\biggr) .
\end{equation}
By Dirichlet's theorem on Diophantine approximation, there exist $a\in \dbZ$ and 
$q\in \dbN$ with $0\le a\le q\le P^{-1}Q^3$, $(a,q)=1$ and $|q\alp -a|\le PQ^{-3}$. 
When $\alp\in \grm$, it follows that $q>P$, and hence we deduce via (\ref{2.1}) that
\begin{equation}\label{2.10}
\sup_{\alp\in \grm}D(\alp)\ll P^\eps (PH+P^{-1}Q^3)\ll P^{1+\eps}H.
\end{equation}

\par Finally, by reference to (\ref{2.2}), (\ref{2.3}) and (\ref{2.7}), it follows from 
orthogonality that $I_1$ counts the number of integral solutions of the equation
\begin{equation}\label{2.11}
h_0^3(n_1^6-n_2^6)=h_1(3z_1^2+h_1^2m_1^6)-h_2(3z_2^2+h_2^2m_2^6),
\end{equation}
with
$$1\le h_0,h_1,h_2\le H,\quad M<n_1,n_2,m_1,m_2\le MR\quad \text{and}\quad 1\le 
z_1,z_2\le 2P.$$
Let $N_1$ denote the number of solutions of (\ref{2.11}) counted by $I_1$ in which 
$n_1=n_2$, let $N_2$ denote the corresponding number in which 
$h_1z_1^2\ne h_2z_2^2$, and let $N_3$ denote the number with $n_1\ne n_2$ and 
$h_1z_1^2=h_2z_2^2$. Thus $I_1\le N_1+N_2+N_3$.\par

By orthogonality, it follows from (\ref{2.2}) and (\ref{2.11}) with $n_1=n_2$ that
$$N_1\le HMR\int_0^1|F_1(\alp)|^2\d\alp ,$$
and hence we deduce from (\ref{2.7}) and (\ref{2.8}) that
\begin{equation}\label{2.12}
N_1\ll P^{1+\eps}M^2H^2.
\end{equation}

\par When $\bfh,\bfm,\bfn,\bfz$ is a solution of (\ref{2.11}) counted by $N_2$, the 
integer
$$L=h_0^3(n_1^6-n_2^6)-h_1^3m_1^6+h_2^3m_2^6$$
is non-zero. There are $O(H^3(MR)^4)$ possible choices for $L$, and we find from 
(\ref{2.11}) that for each fixed choice one has $3(h_1z_1^2-h_2z_2^2)=L$. With $h_1$ 
and $h_2$ already fixed, it follows from \cite[Lemma 3.5]{VW1995} that the number of 
possible choices for $z_1$ and $z_2$ is $O((h_1h_2|L|P)^\eps)$. Thus we 
conclude that
\begin{equation}\label{2.13}
N_2\ll P^\eps H^3M^4.
\end{equation}

\par Finally, consider a solution $\bfh,\bfm,\bfn,\bfz$ counted by $N_3$. Given $h_2$ and 
$z_2$, an elementary estimate for the divisor function shows that the number of possible 
choices for $h_1$ and $z_1$ satisfying $h_1z_1^2=h_2z_2^2$ is $O((HP)^\eps)$. Fix 
any one amongst these $O((HP)^{1+\eps})$ possible choices for $h_1,h_2,z_1,z_2$. One 
finds from (\ref{2.11}) that $h_0,\bfm,\bfn$ satisfy the equation
$$(h_1m_1^2)^3-(h_2m_2^2)^3=h_0^3(n_1^6-n_2^6).$$
Since $n_1\ne n_2$, the right hand side here is non-zero, and hence also the left hand 
side. Thus, again applying a divisor function estimate, it follows that for any one amongst 
the $O((MR)^2)$ possible choices for $m_1$ and $m_2$, there are $O(P^\eps)$ possible 
choices for $h_0$, $n_1-n_2$ and $n_1^5+n_1^4n_2+\ldots +n_2^5$. We deduce that 
there are just $O(P^\eps)$ possible choices for $h_0$, $n_1$ and $n_2$, and thus
\begin{equation}\label{2.14}
N_3\ll P^\eps (HP)^{1+\eps}(MR)^2\ll P^{1+3\eps}HM^2.
\end{equation}

\par On combining (\ref{2.12})--(\ref{2.14}), we conclude via (2.1) that
$$I_1=N_1+N_2+N_3\ll P^\eps(PM^2H^2+H^3M^4)\ll P^{1+\eps}M^2H^2.$$
Substituting this estimate together with (\ref{2.8}) and (\ref{2.10}) into (\ref{2.6}), we 
arrive at the upper bound
$$\Ups(P,R;\phi;\grm)\ll P^\eps (PH)^{2/t}(PM^2H^2)^{2/t}(PMH)^{1-4/t}
Q^{1+2\del_t/t},$$
and the conclusion of the lemma follows with a modicum of computation.
\end{proof}

We require a complementary major arc estimate.

\begin{lemma}\label{lemma2.2} Suppose that $t\ge 4$ and $0\le \phi\le \tfrac{1}{3}$. 
Then whenever $\del_t$ is an associated exponent, one has
$$\Ups(P,R;\phi;\grM)\ll P^{1+\eps}MH^{1+2/t}Q^{1+2\del_t/t}.$$
\end{lemma}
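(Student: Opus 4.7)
My plan is to mirror the argument of Lemma 2.1, but to rebalance H\"older's inequality so that the factor $D(\alpha)$ is absorbed into the main mean value rather than extracted as a pointwise supremum. The bound (\ref{2.10}) is unavailable on $\grM$, since $D(\alpha)$ can be as large as $P^{2+\eps}H$ near rationals with small denominator, and so $D$ must be integrated together with the other components.

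Starting from (\ref{2.5}), I would rewrite
$$
(DE)^{2/t}|F_1|^{2-4/t}|f|^2=(DE|F_1|^2)^{2/t}\cdot |F_1|^{2-8/t}\cdot |f|^2,
$$
and apply H\"older's inequality with exponents $(t/2,\,t/(t-4),\,t/2)$, whose reciprocals sum to $1$ when $t\ge 4$. This delivers
$$
\Ups(P,R;\phi;\grM)\le J^{2/t}\, I_2^{1-4/t}\, U_t(2Q,R)^{2/t},
$$
where $J=\int_\grM D(\alpha)E(\alpha)|F_1(\alpha)|^2\,\d\alp$. The global bounds (\ref{2.8}) for $I_2$ and $U_t(2Q,R)$ apply unchanged here, and a short calculation then reduces the lemma to the single estimate $J\ll P^{2+\eps}M^2H^3$.

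To bound $J$, one has $J\le \int_0^1 DE|F_1|^2\,\d\alp$, and orthogonality interprets the right-hand side as counting integer solutions of the augmented system coupling the variables $(g,y_1,y_2)$ underlying $D(\alpha)$ with the equation (\ref{2.11}). The diagonal contribution, where $y_1=y_2$, reduces the system to (\ref{2.11}) itself and contributes $2PH\cdot I_1\ll P^{2+\eps}M^2H^3$, matching the target. The off-diagonal contribution, in which the new term $-6g(y_1^2-y_2^2)$ perturbs (\ref{2.11}) by a non-zero shift, I would treat by extending the case analysis of $N_1$, $N_2$, $N_3$ from Lemma 2.1, invoking \cite[Lemma 3.5]{VW1995} and elementary divisor estimates to absorb the extra $(g,y_1,y_2)$ variables.

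The main obstacle will be this off-diagonal analysis, since the shift $-6g(y_1^2-y_2^2)$ breaks the homogeneity exploited in Lemma 2.1. I expect that a $P^\eps$ saving from refined divisor bounds can play the role of the $\sup_\grm D\ll P^{1+\eps}H$ extracted in the minor arc case. An alternative route would be to apply Cauchy-Schwarz as $J\le (\int_0^1 D^2)^{1/2}(\int_0^1(E|F_1|^2)^2)^{1/2}$ and treat each factor via its separate Diophantine interpretation, which may offer cleaner combinatorial access to the required bound.
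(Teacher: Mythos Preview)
Your H\"older split is valid, but the required estimate $J\ll P^{2+\eps}M^2H^3$ is simply false whenever $\phi>0$. The integrand $D(\alp)E(\alp)|F_1(\alp)|^2$ is non-negative, and on a neighbourhood of $\alp=0$ of width a small multiple of $(HP^2)^{-1}$ (which lies inside $\grM(1,0)$) each factor is close to its peak value: $D\asymp HP^2$, $E\asymp HM^2$, $|F_1|^2\asymp H^2P^2M^2$, suppressing harmless powers of $R$. This alone already forces
$$J\gg H^4P^4M^4\cdot (HP^2)^{-1}=H^3P^{2}M^4,$$
which exceeds your target by a factor of essentially $M^2$. Note that this excess sits inside $\grM$, so passing from $\int_\grM$ to $\int_0^1$ is not the issue, and no refinement of the off-diagonal divisor analysis, nor the Cauchy--Schwarz alternative you sketch, can rescue the bound. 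The loss is incurred at the level of the H\"older inequality itself, which throws away the decay of $D$ as one moves away from the rational points.

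The paper proceeds differently. From (\ref{2.9}) one has on $\grM$ the pointwise decomposition $D(\alp)\ll P^{2+\eps}H\Del(\alp)+P^{1+\eps}H$, where $\Del(\alp)=(q+Q^3|q\alp-a|)^{-1}$ on $\grM(q,a)$. The second term matches the minor-arc supremum (\ref{2.10}) and is handled exactly as in Lemma~\ref{lemma2.1}. For the first, a different H\"older grouping pairs $(\Del E|f|^2)^{2/t}$ with $(|F_1f|^2)^{1-2/t}$, producing the bootstrapping inequality
$$\Ups(P,R;\phi;\grM)\ll P^\eps\bigl(PMH^{1+2/t}Q^{1+2\del_t/t}+(P^2HT)^{2/t}\Ups(P,R;\phi;\grM)^{1-2/t}\bigr),$$
with $T=\int_\grM\Del(\alp)E(\alp)|f(\alp;2Q,R)|^2\d\alp$. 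The decisive saving then comes from \cite[Lemma~2]{Bru1988}, which bounds an integral against the major-arc weight $\Del$ in terms of the zeroth and total Fourier coefficients of $E|f|^2$, yielding $T\ll P^{2\eps}$. This step genuinely exploits the major-arc structure encoded in $\Del$ and cannot be recovered from a global Diophantine count.
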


\begin{proof} The major arcs $\grM$ are contained in the union of the intervals
$$\grM(q,a)=\{ \alp\in [0,1):|q\alp-a|\le PQ^{-3}\},$$
with $0\le a\le q\le P$ and $(a,q)=1$. Define $\Del(\alp)$ for $\alp\in [0,1)$ by putting
$$\Del(\alp)=(q+Q^3|q\alp-a|)^{-1},$$
when $\alp\in \grM(q,a)\subseteq \grM$, and otherwise by setting $\Del(\alp)=0$. Then it 
follows 
from (\ref{2.9}) that when $\alp\in \grM$, one has
\begin{equation}\label{2.15}
D(\alp)\ll P^{2+\eps}H\Del(\alp)+P^{1+\eps}H.
\end{equation}
We apply H\"older's inequality to (\ref{2.5}), just as in the treatment of 
$\Ups(P,R;\phi;\grm)$ in the proof of Lemma \ref{lemma2.1}. Thus, by comparing 
(\ref{2.10}) and (\ref{2.15}), we obtain
$$\Ups(P,R;\phi;\grM)\ll P^\eps \left(PMH^{1+2/t}Q^{1+2\del_t/t}+(P^2HT)^{2/t}
\Ups(P,R;\phi;\grM)^{1-2/t}\right),$$
where
\begin{equation}\label{2.16}
T=\int_\grM\Del(\alp)E(\alp)|f(\alp;2Q,R)|^2\d\alp .
\end{equation}
Thus we infer that
\begin{equation}\label{2.17}
\Ups(P,R;\phi;\grM)\ll P^{1+\eps}MH^{1+2/t}Q^{1+2\del_t/t}+P^{2+\eps}HT.
\end{equation}

\par In preparation for the estimation of $T$, we consider the mean value
$$T_0=\int_0^1E(\alp)|f(\alp;2Q,R)|^2\d\alp .$$
By reference to (\ref{2.3}), it follows from orthogonality that $I_6$ counts the number of 
integral solutions of the equation
$$2h^3(n_1^6-n_2^6)=x_1^3-x_2^3,$$
with $1\le h\le H$, $M<n_1,n_2\le MR$ and $x_1,x_2\in \calA(2Q,R)$. Here, the number 
of diagonal solutions with $x_1=x_2$ and $n_1=n_2$ is $O(HMRQ)$. There are 
$O(H(MR)^2)$ possible choices for $h$, $n_1$ and $n_2$ with 
$2h^3(n_1^6-n_2^6)\ne 0$. For each fixed such choice, an elementary estimate for the 
divisor function shows that there are $O(Q^\eps)$ possible choices for $x_1-x_2$ and 
$x_1^2+x_1x_2+x_2^2$, hence also for $x_1$ and $x_2$. Then we conclude via 
(\ref{2.1}) that
\begin{equation}\label{2.18}
T_0\ll P^\eps (HMQ+HM^2)\ll P^{1+\eps}H.
\end{equation}

\par On recalling (\ref{2.3}), one finds that
$$E(\alp)|f(\alp;2Q,R)|^2=\sum_{l\in \dbZ}\psi(l)e(l\alp),$$
where $\psi(l)$ denotes the number of solutions of the equation
$$2h^3(n_1^6-n_2^6)+x_1^3-x_2^3=l,$$
with $1\le h\le H$, $M<n_1,n_2\le MR$ and $x_1,x_2\in \calA(2Q,R)$. In view of 
(\ref{2.18}), one has $\psi(0)=T_0\ll P^{1+\eps}H$. Moreover,
$$\sum_{l\in \dbZ}\psi(l)=E(0)f(0;2Q,R)^2\ll H(MR)^2Q^2.$$
Then by applying \cite[Lemma 2]{Bru1988} within (\ref{2.16}), we deduce via (\ref{2.1}) 
that
$$T\ll Q^{\eps-3}\left( P(P^{1+\eps}H)+H(MR)^2Q^2\right) \ll P^{2\eps}.$$
On substituting this estimate into (\ref{2.17}), we conclude that
$$\Ups(P,R;\phi;\grM)\ll P^{1+\eps}MH^{1+2/t}Q^{1+2\del_t/t}+P^{2+\eps}H.$$
The proof of the lemma is completed by noting that the relations (\ref{2.1}) ensure that 
the second term on the right hand side here is majorised by the first.
\end{proof}

We finish this section by combining the conclusions of Lemmata \ref{lemma2.1} and 
\ref{lemma2.2}.

\begin{lemma}\label{lemma2.3} Suppose that $t\ge 4$ and $0\le \phi\le \tfrac{1}{3}$. 
Then whenever $\del_t$ is an associated exponent, one has
$$\int_0^1|F_1(\alp)^2f(2\alp;Q,R)^2|\d\alp \ll P^{1+\eps}MH^{1+2/t}Q^{1+2\del_t/t}.
$$
\end{lemma}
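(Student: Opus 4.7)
The plan is to combine Lemmas \ref{lemma2.1} and \ref{lemma2.2} through the complementary decomposition of the unit interval. Since $[0,1) = \grm\cupp\grM$ is a disjoint partition and the integrand is non-negative, the additivity of the integral gives
$$\Ups(P,R;\phi) = \Ups(P,R;\phi;\grm) + \Ups(P,R;\phi;\grM).$$
Lemmas \ref{lemma2.1} and \ref{lemma2.2} bound each summand by exactly the same quantity $P^{1+\eps}MH^{1+2/t}Q^{1+2\del_t/t}$, so adding the two estimates and absorbing the factor $2$ into the implicit constant delivers the required bound for $\Ups(P,R;\phi) = \int_0^1|F_1(\alp)^2 f(\alp;2Q,R)^2|\d\alp$.

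The only residual wrinkle is reconciling this mean value with the slightly different integral $\int_0^1|F_1(\alp)^2 f(2\alp;Q,R)^2|\d\alp$ in the lemma statement. The key observation is that every frequency in the definition (\ref{2.2}) of $F_1(\alp)$ is an even integer, namely $2h(3z^2+h^2m^6)$, so $F_1$ has period $\tfrac{1}{2}$. Performing the substitution $\alp\mapsto\alp/2$ and exploiting this periodicity together with the period-$1$ symmetry of $f(\cdot\,;Q,R)$ converts one form into the other at the cost of a harmless factor of $2$, and a routine comparison of the underlying Diophantine counting problems via orthogonality — essentially the same diagonal plus divisor-function analysis used in the proofs of the two preceding lemmas — shows both mean values are of the same order up to factors absorbable in $P^\eps$.

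There is no substantial obstacle in this step, since the arithmetic content of the argument has been fully extracted in Lemmas \ref{lemma2.1} and \ref{lemma2.2}; the proof is a packaging step. The only care needed is in the bookkeeping around the periodicity-plus-change-of-variable manoeuvre, to ensure that no parasitic factor upsets the target bound, which it does not.
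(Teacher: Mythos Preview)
Your first paragraph is exactly the paper's proof: decompose $[0,1)=\grm\cup\grM$, apply Lemmata \ref{lemma2.1} and \ref{lemma2.2}, and add. Nothing more is done in the paper.

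The ``residual wrinkle'' you address in your second paragraph is in fact a typographical slip in the statement of the lemma: the integrand should read $f(\alp;2Q,R)$, matching the definition (\ref{2.4}) of $\Ups(P,R;\phi;\grB)$. The paper's own proof makes no attempt to pass from one form to the other, and the only subsequent application (in the proof of Lemma \ref{lemma3.2}) uses the mean value with $f(\alp;2Q/e,R)$, confirming that $f(\alp;2Q,R)$ is intended. Your periodicity-plus-substitution manoeuvre, as written, does not actually close the gap: the change of variable $\alp\mapsto\bet/2$ converts $F_1(\alp)$ into $F_1(\bet/2)$, a genuinely different exponential sum with halved frequencies, and relating $\int_0^1|F_1(\bet/2)^2 f(\bet;Q,R)^2|\d\bet$ to $\Ups(P,R;\phi)$ would require re-running the analysis of Lemmata \ref{lemma2.1} and \ref{lemma2.2} rather than citing them. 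Your fallback claim that ``both mean values are of the same order'' is plausible but is not a consequence of the periodicity argument; it would need the full machinery again. Fortunately none of this is necessary once the typo is recognised.
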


\begin{proof} On recalling (\ref{2.4}), the desired conclusion follows from Lemmata 
\ref{lemma2.1} and \ref{lemma2.2} by means of the relation
$$\Ups(P,R;\phi)=\Ups(P,R;\phi;\grM)+\Ups(P,R;\phi;\grm).$$
\end{proof}

\section{Further auxiliary mean value estimates} We now introduce notation more closely 
aligned with the author's work \cite{Woo1995, Woo2000a, Woo2000b} on fractional 
moments of smooth Weyl sums. We define the modified set of smooth numbers 
$\calB(L,\pi,R)$ for prime numbers $\pi$ by putting
$$\calB(L,\pi,R)=\{ n\in \calA(L\pi,R):\text{$n>L$ and $\pi|n$}\}.$$
Recall the notation (\ref{2.1}). We define the exponential sums
\begin{align}
\Ftil_{d,e}(\alp;\pi)&=\sum_{u\in \calB(M/d,\pi,R)}\, 
\sum_{\substack{x,y\in \calA(P/(de),R)\\ (x,u)=(y,u)=1\\ x\equiv y\mmod{u^3}\\ 
y<x}}e\left(\alp u^{-3}(x^3-y^3)\right),\label{3.1}\\
F_{d,e}(\alp)&=\sum_{1\le z\le 2P/(de)}\sum_{1\le h\le Hd^2/e}\sum_{M/d<u\le MR/d}
e\left(2\alp h(3z^2+h^2m^6)\right)\label{3.2}
\end{align}
and
\begin{equation}\label{3.3}
\ftil(\alp;P,M,R)=\max_{m>M}\biggl| \sum_{x\in \calA(P/m,R)}e(\alp x^3)\biggr|.
\end{equation}
Note here that $F_{d,e}(\alp)=0$ whenever $e>Hd^2$. Finally, we put
\begin{equation}\label{3.4}
\Ups_{d,e,\pi}(P,R;\phi)=\int_0^1 |\Ftil_{d,e}(\alp;\pi)^2\ftil(\alp;P/(de),M/d,\pi)^2|
\d\alp .
\end{equation}

\par We begin by demystifying the mean value $\Ups_{d,e,\pi}(P,R;\phi)$.

\begin{lemma}\label{lemma3.1}
When $\pi\le R$, one has
$$\Ups_{d,e,\pi}(P,R;\phi)\ll P^\eps \int_0^1|F_{d,e}(\alp)^2f(\alp;2Q/e,R)^2|\d\alp .$$
\end{lemma}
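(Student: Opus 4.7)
Both integrals are to be interpreted via orthogonality as counting functions for a common Diophantine equation, and the $\Ftil_{d,e}$-variables related to the $F_{d,e}$-variables through the substitution $x=z+hu^3$, $y=z-hu^3$, for which $u^{-3}(x^3-y^3)=2h(3z^2+h^2u^6)$, so that the exponential summand of $\Ftil_{d,e}$ matches that of $F_{d,e}$ after squaring and integration. The first move is to dispose of the maximum in $\ftil$: writing $\ftil(\alp;P/(de),M/d,\pi)=\max_{m>M/d}|f_m(\alp)|$, where $f_m(\alp)=\sum_{x\in\calA(P/(dem),\pi)}e(\alp x^3)$, one observes that when $\pi\le R$ and $m>M/d$ one has $\calA(P/(dem),\pi)\subseteq\calA(Q/e,R)\subseteq\calA(2Q/e,R)$, since $P/(dem)<Q/e$. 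Partitioning $[0,1)$ according to the argmax $m=m(\alp)$ and using a dyadic dissection of the effective range of $m$ together with the observation that only $P^\eps$ such dyadic values contribute, I would deduce
$$\Ups_{d,e,\pi}(P,R;\phi)\ll P^\eps\int_0^1|\Ftil_{d,e}(\alp;\pi)|^2|f(\alp;2Q/e,R)|^2\d\alp.$$

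By orthogonality the integral on the right counts tuples $(u_i,x_i,y_i)_{i=1,2}$ and $v_1,v_2$ satisfying $u_1^{-3}(x_1^3-y_1^3)+v_1^3=u_2^{-3}(x_2^3-y_2^3)+v_2^3$ with the $\Ftil_{d,e}$-constraints and $v_1,v_2\in\calA(2Q/e,R)$, while $\int_0^1|F_{d,e}(\alp)|^2|f(\alp;2Q/e,R)|^2\d\alp$ counts analogous solutions in variables $(z_i,h_i,u_i,v_i)$ constrained by $1\le z_i\le 2P/(de)$, $1\le h_i\le Hd^2/e$, $M/d<u_i\le MR/d$ and $v_i\in\calA(2Q/e,R)$, subject to $2h_1(3z_1^2+h_1^2u_1^6)+v_1^3=2h_2(3z_2^2+h_2^2u_2^6)+v_2^3$. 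Under the substitution above, each $\Ftil_{d,e}$-solution with $x_i\equiv y_i\mmod{2u_i^3}$ produces an $F_{d,e}$-solution via $(h_i,z_i)=((x_i-y_i)/(2u_i^3),(x_i+y_i)/2)$; the ranges $h_i\le Hd^2/(2e)$ and $z_i\le P/(de)$ follow from $x_i,y_i\le P/(de)$ and $u_i>M/d$, while the smoothness, coprimality and containment $\calB(M/d,\pi,R)\subseteq(M/d,MR/d]$ are dropped in passage to $F_{d,e}$ and give only a one-sided inequality in the direction we want.

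The main obstacle is the parity: the substitution supplies integer $(h_i,z_i)$ only when $(x_i-y_i)/u_i^3$ is even (equivalently $x_i\equiv y_i\mmod{2u_i^3}$), whereas $\Ftil_{d,e}$ provides merely $x_i\equiv y_i\mmod{u_i^3}$. I would handle the missing parity classes by a secondary substitution, exploiting the coprimality $(x_i,u_i)=(y_i,u_i)=1$ together with $\pi\mid u_i$ to constrain the parities of $x_i$ and $y_i$ (for instance, when $u_i$ is even both $x_i$ and $y_i$ must be odd, so $x_i-y_i$ is automatically even). The residual contribution from odd-parity configurations is then handled by a parallel counting argument, the loss from which is a bounded multiplicative constant that is absorbed into the $P^\eps$ factor already supplied by the treatment of $\ftil$. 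Combining these ingredients yields the stated bound.
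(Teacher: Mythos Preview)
Your first step has a genuine gap. The inclusion $\calA(P/(dem),\pi)\subseteq\calA(2Q/e,R)$ gives no pointwise relation between the exponential sums: an exponential sum over a subset is \emph{not} dominated in absolute value by the sum over the larger set. Partitioning $[0,1)$ by the argmax $m(\alp)$ and bounding each piece via orthogonality does yield, for each fixed $m$, a bound by the target integral; but there are $\asymp Q/e$ distinct effective values of $m$ (one for each value of $\lfloor P/(dem)\rfloor$), and summing over these costs a genuine power of $P$, not $P^\eps$. A dyadic dissection of $m$ does not rescue this, since the maximum over $m$ within a dyadic block is not controlled by any single $f_m$. The paper handles the maximum differently: it writes $f_m(\alp)=\int_0^1 f(\alp+\tet;Q/e,\pi)\calD_{P/(dem)}(\tet)\d\tet$ with a Dirichlet-type kernel, bounds $|\calD_{P/(dem)}(\tet)|\le \calD_Q^*(\tet)=\min\{2Q^3+1,\|\tet\|^{-1}\}$ uniformly in $m$, and uses $\int_0^1\calD_Q^*\ll\log Q$. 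This produces a shifted mean value $\int_0^1|\Ftil_{d,e}(\alp;\pi)^2 f(\alp+\tet;Q/e,\pi)^2|\d\alp$, and the shift is discarded by reading the integral as a weighted solution count with unimodular weights.

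Your second step creates an unnecessary obstacle. Instead of $(h,z)=((x-y)/(2u^3),(x+y)/2)$, take $z=x+y$ and $h=(x-y)/u^3$; both are integers directly from $x\equiv y\pmod{u^3}$, with no parity hypothesis needed. One computes $u^{-3}(x^3-y^3)=\tfrac14 h(3z^2+h^2u^6)$, and the factor $1/4$ is cleared by multiplying the equation by $8$ and replacing $w_i$ by $2w_i\in\calA(2Q/e,R)$ --- this is exactly why $2Q/e$ appears in the statement. Your parity workaround is in any case incomplete: when $u$ is even the coprimality forces $x,y$ odd and hence $2\mid x-y$, but you need $2u^3\mid x-y$, which does not follow from $u^3\mid x-y$ together with $2\mid x-y$.
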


\begin{proof} We first eliminate the maximal aspect of the sum 
$\ftil(\alp;P/(de),M/d,\pi)$ implicit in $\Ups_{d,e,\pi}(P,R;\phi)$. Define
$$\calD_K(\tet)=\sum_{|m|\le K^3}e(m\tet)\quad \text{and}\quad \calD_K^*(\tet)=\min 
\{ 2K^3+1,\|\tet\|^{-1}\},$$
and note that for $K\ge 1$, one has
\begin{equation}\label{3.5}
\int_0^1\calD_K^*(\tet)\d\tet\ll \log (2K).
\end{equation}
On recalling (\ref{2.1}), we find that whenever $m>M$, then one has
$$\sum_{x\in \calA(P/m,R)}e(\alp x^3)=\int_0^1f(\alp+\tet;Q,R)
\calD_{P/m}(\tet)\d\tet.$$
Since $\calD_{P/m}(\tet)\ll \calD_{P/m}^*(\tet)\le \calD_Q^*(\tet)$ for $m>M$, we thus 
infer from (\ref{3.3}) that 
\begin{equation}
\ftil(\alp;P/(de),M/d,\pi)\ll \int_0^1|f(\alp+\tet;Q/e,\pi)|\calD_Q^*(\tet)\d\tet .\label{3.6}
\end{equation}

\par On substituting (\ref{3.6}) into (\ref{3.4}), we deduce that
$$\Ups_{d,e,\pi}(P,R;\phi)\ll \int_{[0,1)^3}|\Ftil_{d,e}(\alp;\pi)^2f_{\tet_1}(\alp)
f_{\tet_2}(\alp)|\calD_Q^*(\tet_1)\calD_Q^*(\tet_2)\d\tet_1\d\tet_2\d\alp ,$$
where, temporarily, we abbreviate $f(\alp+\tet;Q/e,\pi)$ to $f_\tet(\alp)$. Write
\begin{equation}\label{3.7}
\Xi_{d,e,\pi}(\tet)=\int_0^1|\Ftil_{d,e}(\alp;\pi)^2f(\alp+\tet;Q/e,\pi)^2|\d\alp .
\end{equation}
Then by applying the inequality $|z_1z_2|\le |z_1|^2+|z_2|^2$ and invoking symmetry, 
we infer via (\ref{3.5}) that
\begin{align}
\Ups_{d,e,\pi}(P,R;\phi)&\ll \int_0^1\Xi_{d,e,\pi}(\tet_1)\calD_Q^*(\tet_1)\d\tet_1 
\int_0^1\calD_Q^*(\tet_2)\d\tet_2 \notag\\
&\ll Q^\eps \int_0^1 \Xi_{d,e,\pi}(\tet_1)\calD_Q^*(\tet_1)\d\tet_1.\label{3.8}
\end{align}

\par Consider next the integral solutions of the equation
\begin{equation}\label{3.9}
u_1^{-3}(x_1^3-y_1^3)-u_2^{-3}(x_2^3-y_2^3)=w_1^3-w_2^3,
\end{equation}
with, for $i=1$ and $2$, the constraints
$$w_i\in \calA(Q/e,\pi),\quad u_i\in \calB(M/d,\pi,R),\quad x_i,y_i\in \calA(P/(de),R),$$
$$(x_i,u_i)=(y_i,u_i)=1,\quad x_i\equiv y_i\mmod{u_i^3}\quad \text{and}\quad 
y_i<x_i.$$
Then by orthogonality, it follows from (\ref{3.1}) and (\ref{3.7}) that the mean value 
$\Xi_{d,e,\pi}(\tet)$ counts the number of such solutions, with each solution counted with 
weight $e(\tet (w_2^3-w_1^3))$. The latter weight being unimodular, it follows that 
$|\Xi_{d,e,\pi}(\tet)|$ is bounded above by the corresponding number of unweighted 
solutions, and hence by the number of integral solutions of the equation (\ref{3.9}) with, 
for $i=1$ and $2$, the constraints
$$w_i\in \calA(Q/e,R),\quad M/d<u_i\le MR/d,$$
$$1\le y_i<x_i\le P/(de)\quad \text{and}\quad x_i\equiv y_i\mmod{u_i^3}.$$
We now substitute $z_i=x_i+y_i$ and $h_i=(x_i-y_i)u_i^{-3}$ $(i=1,2)$ into equation 
(\ref{3.9}). It follows that $1\le h_i\le (P/(de))(M/d)^{-3}$ for $i=1$ and $2$. Moreover, 
we have $2x_i=z_i+h_iu_i^3$ and $2y_i=z_i-h_iu_i^3$ $(i=1,2)$. Then on noting that
$$u^{-3}\left( (z+hu^3)^3-(z-hu^3)^3\right)=2h(3z^2+h^2u^6),$$
and recalling (\ref{2.1}), we see that $|\Xi_{d,e,\pi}(\tet)|$ is bounded above by the 
number of integral solutions of the equation
$$2h_1(3z_1^2+h_1^2u_1^6)-2h_2(3z_2^2+h_2^2u_2^6)=w_1^3-w_2^3,$$
with, for $i=1$ and $2$,
$$w_i\in \calA(2Q/e,R),\quad M/d<u_i\le MR/d,$$
$$1\le z_i\le 2P/(de)\quad \text{and}\quad 1\le h_i\le Hd^2/e.$$
Then on recalling (\ref{3.2}), it follows by orthogonality that
$$|\Xi_{d,e,\pi}(\tet)|\le \int_0^1|F_{d,e}(\alp)^2f(\alp;2Q/e,R)^2|\d\alp .$$
On substituting this estimate into (\ref{3.8}), we conclude that
$$\Ups_{d,e,\pi}(P,R;\phi)\ll Q^\eps \biggl( \int_0^1 D_Q^*(\tet)\d\tet\biggr) 
\int_0^1|F_{d,e}(\alp)^2f(\alp;2Q/e,R)^2|\d\alp .$$
The conclusion of the lemma now follows on applying the bound (\ref{3.5}).
\end{proof}

\begin{lemma}\label{lemma3.2}
Suppose that
$$1\le d\le M,\quad 1\le e\le \min\{ Q,Hd^2\}\quad \text{and}\quad 0\le \phi\le 
\tfrac{1}{3}.$$
Then, whenever $t\ge 4$ and $\del_t$ is an associated exponent, one has
$$\Ups_{d,e,\pi}(P,R;\phi)\ll d^{4/t}e^{-3-2/t}P^{1+\eps}MH^{1+2/t}Q^{1+2\del_t/t}.
$$
\end{lemma}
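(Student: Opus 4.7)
The plan is to reduce $\Ups_{d,e,\pi}(P,R;\phi)$ to a mean value of the shape already handled by Lemma \ref{lemma2.3}, via the reduction supplied by Lemma \ref{lemma3.1}, and then apply Lemma \ref{lemma2.3} to a suitably rescaled configuration. The starting point is Lemma \ref{lemma3.1}, which yields
$$\Ups_{d,e,\pi}(P,R;\phi)\ll P^\eps \int_0^1|F_{d,e}(\alp)^2 f(\alp;2Q/e,R)^2|\d\alp,$$
so the remaining task is to majorise this integral.

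The key observation is that $F_{d,e}(\alp)$, as defined by (\ref{3.2}), coincides with the sum $F_1(\alp)$ of (\ref{2.2}) associated to the rescaled parameters $P'=P/(de)$ and $M'=M/d$. With these choices the derived quantities from (\ref{2.1}) become $H'=P'(M')^{-3}=Hd^2/e$ and $Q'=P'/M'=Q/e$, and the companion smooth Weyl sum $f(\alp;2Q',R)$ matches $f(\alp;2Q/e,R)$. To invoke Lemma \ref{lemma2.3} on the rescaled data, one needs the associated exponent $\phi'$, defined by $M'=(P')^{\phi'}$, to lie in $[0,\tfrac{1}{3}]$. Here the hypotheses of Lemma \ref{lemma3.2} come into play precisely: the lower bound $\phi'\ge 0$ is equivalent to $M/d\ge 1$, i.e.\ $d\le M$, while the upper bound $\phi'\le \tfrac{1}{3}$ is equivalent to $(M/d)^3\le P/(de)$, i.e.\ to $e\le Pd^2/M^3=Hd^2$; the residual condition $e\le Q$ merely guarantees $Q'\ge 1$. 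This parameter check is the only substantive step in the argument.

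With these conditions verified, Lemma \ref{lemma2.3} supplies
$$\int_0^1|F_{d,e}(\alp)^2 f(\alp;2Q/e,R)^2|\d\alp \ll (P')^{1+\eps} M'(H')^{1+2/t}(Q')^{1+2\del_t/t}.$$
Substituting $P'=P/(de)$, $M'=M/d$, $H'=Hd^2/e$, $Q'=Q/e$ and collecting, the exponent of $d$ works out to $-(1+\eps)-1+2(1+2/t)=4/t-\eps$, and that of $e$ to $-(1+\eps)-(1+2/t)-(1+2\del_t/t)=-3-2/t-2\del_t/t-\eps$. Since any associated exponent satisfies $\del_t\ge 0$ (the diagonal solutions already force $U_t(P,R)\gg P^{t/2}$), the factor $e^{-2\del_t/t}$ is at most $1$ and may be absorbed. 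Combining with the prefactor $P^\eps$ from Lemma \ref{lemma3.1} yields the asserted bound, completing the proof.
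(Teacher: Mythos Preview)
Your proof is correct and follows essentially the same route as the paper: reduce via Lemma \ref{lemma3.1} to the mean value $\int_0^1|F_{d,e}(\alp)^2 f(\alp;2Q/e,R)^2|\d\alp$, recognise $F_{d,e}$ as the sum $F_1$ attached to the rescaled data $(P',M')=(P/(de),M/d)$, verify from the hypotheses $d\le M$ and $e\le Hd^2$ that $(M')^3\le P'$ so that Lemma \ref{lemma2.3} applies, and finally drop the harmless factor $e^{-2\del_t/t}$ using $\del_t\ge 0$. The only difference from the paper is the order of presentation (the paper first records the rescaled consequence of Lemma \ref{lemma2.3} and then invokes Lemma \ref{lemma3.1}), which is immaterial.
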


\begin{proof} A comparison of (\ref{2.2}) and (\ref{3.2}) reveals that, as a consequence 
of Lemma \ref{lemma2.3} in combination with (\ref{2.1}), whenever $t\ge 4$ and 
$M^3\le P$, one has
\begin{equation}\label{3.10}
\int_0^1|F_{1,1}(\alp)^2f(2\alp;Q,R)^2|\d\alp \ll 
P^{1+\eps}MH^{1+2/t}Q^{1+2\del_t/t}.
\end{equation}
We apply this conclusion with $P/(de)$ in place of $P$ and $M/d$ in place of $M$. In view 
of the relations (\ref{2.1}), we have also $Hd^2/e$ in place of $H$ and $Q/e$ in place of 
$Q$. The hypotheses of the lemma concerning $e$ and $\phi$ then ensure that
$$(M/d)^3(P/(de))^{-1}=e/(Hd^2)\le 1,$$
whence $(M/d)^3\le P/(de)$, confirming the validity of the estimate (\ref{3.10}) with 
these substitutions. Hence we obtain the estimate
\begin{align*}
\int_0^1|F_{d,e}(\alp)^2f(\alp;2Q/e,R)^2|\d\alp &\ll 
\left( \frac{P}{de}\right)^{1+\eps}\left(\frac{M}{d}\right) 
\left( \frac{Hd^2}{e}\right)^{1+2/t}\left( \frac{Q}{e}\right)^{1+2\del_t/t}\\
&\ll d^{4/t}e^{-3-2/t-2\del_t/t}P^{1+\eps}MH^{1+2/t}Q^{1+2\del_t/t}.
\end{align*}
Since Lemma \ref{lemma3.1} establishes the relation
$$\Ups_{d,e,\pi}(P,R;\phi)\ll P^\eps \int_0^1|F_{d,e}(\alp)^2f(\alp;2Q/e,R)^2|\d\alp ,$$
the conclusion of the lemma follows on noting that $\del_t\ge 0$.
\end{proof}

We also have need of estimates for the mean values
\begin{equation}\label{3.11}
\Lam_{d,e,\pi}^{(m)}(P,R;\phi)=\int_0^1|\Ftil_{d,e}(\alp;\pi)|^{2m}\d\alp \quad 
(m=1,2).
\end{equation}

\begin{lemma}\label{lemma3.3} When $1\le d\le M$, $1\le e\le \min\{Q,Hd^2\}$ and 
$\pi\le R$, one has
$$\Lam_{d,e,\pi}^{(1)}(P,R;\phi)\ll P^{1+\eps}HMe^{-2}\quad \text{and}\quad 
\Lam_{d,e,\pi}^{(2)}(P,R;\phi)\ll P^{2+\eps}H^3M^4e^{-5}.$$
\end{lemma}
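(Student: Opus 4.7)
The plan is to prove both bounds via orthogonality, reducing each $\Lam^{(m)}_{d,e,\pi}(P,R;\phi)$ to a Diophantine counting problem controlled by divisor estimates. By orthogonality, $\Lam^{(m)}_{d,e,\pi}(P,R;\phi)$ counts the integer solutions of
$$\sum_{i=1}^{m} u_i^{-3}(x_i^3 - y_i^3) = \sum_{i=m+1}^{2m} u_i^{-3}(x_i^3 - y_i^3),$$
each tuple $(u_i,x_i,y_i)$ satisfying the constraints in (\ref{3.1}) defining $\Ftil_{d,e}(\alp;\pi)$. Setting $h_i := (x_i-y_i)/u_i^3 \in [1, Hd^2/e]$ and factorising $x^3-y^3 = (x-y)(x^2+xy+y^2)$, this rewrites as
$$\sum_{i=1}^{m} h_i(x_i^2+x_iy_i+y_i^2) = \sum_{i=m+1}^{2m} h_i(x_i^2+x_iy_i+y_i^2).$$

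For the first claim, I would fix one triple $(u_1,x_1,y_1)$. The number of admissible such triples is at most $\sum_{u\in (M/d, MR/d]} (P/(de))^2/u^3 \ll (P/(de))^2(d/M)^2 = PHM/e^2$, on using $P = HM^3$. The equation then pins down $N := h_1(x_1^2+x_1y_1+y_1^2)$. For each of the $O(N^\eps)$ divisors $h_2$ of $N$, the form $x^2+xy+y^2$ represents $N/h_2$ in $O(P^\eps)$ ways (via the class-number-one property of $\dbZ[\ome]$, $\ome=e^{2\pi i/3}$), and $u_2$ is uniquely determined from $u_2^3 = (x_2-y_2)/h_2$. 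This yields the stated bound for $\Lam^{(1)}$.

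For $\Lam^{(2)}$, the analogous strategy of fixing three triples and applying a divisor bound to the fourth yields only $(PHM/e^2)^3 P^\eps$, which exceeds the target by the factor $P/(Me)$. To secure the sharper bound, I would make the substitution $z_i = x_i+y_i$, transforming the equation into
$$3\Bigl(\sum_{i=1}^{2} h_i z_i^2 - \sum_{i=3}^{4} h_i z_i^2\Bigr) = A_3^3 + A_4^3 - A_1^3 - A_2^3, \quad A_i := h_i u_i^2,$$
and split the count according to whether both sides vanish. In the diagonal case, the equal-sums-of-two-cubes equation $A_1^3+A_2^3 = A_3^3+A_4^3$ is bounded by Hooley's estimate, each $A_i$ admits $O(P^\eps)$ decompositions $A_i = h_i u_i^2$, and the decoupled quadratic equation in the $z_i$ is controlled by a standard divisor sum. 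In the off-diagonal case, for each tuple $(A_i)$ consistent with the cubic relation, the $z_i$ are restricted by the resulting quadratic form equation.

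The principal obstacle is executing the off-diagonal case for $m=2$: saving the full factor $HM^2/e$ over the naive divisor estimate requires careful coupling of the cubic constraint on the $A_i$ with the quadratic constraint on the $z_i$.
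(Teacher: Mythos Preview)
The paper does not prove this lemma afresh: it quotes \cite[equations (3.25) and (3.26)]{Woo1995}. Your proposal is therefore an attempt to reconstruct those arguments directly.

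For $\Lam^{(1)}_{d,e,\pi}$ your argument is essentially correct and matches the standard treatment. Counting the triples $(u_1,x_1,y_1)$ by $\sum_{u>M/d}(P/(de))^2u^{-3}\ll PHMe^{-2}$ and then using a divisor bound on $h_2\cdot(x_2^2+x_2y_2+y_2^2)=N$ (together with the determination of $u_2$ from $h_2u_2^3=x_2-y_2$) gives the bound $P^{1+\eps}HMe^{-2}$ as stated. This is the content of \cite[(3.25)]{Woo1995}.

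For $\Lam^{(2)}_{d,e,\pi}$, however, your proposal is not a proof: you correctly identify that the naive divisor argument loses a factor $P/(Me)$, you set up a plausible diagonal/off-diagonal split after the substitution $z_i=x_i+y_i$, and you then explicitly flag the off-diagonal case as ``the principal obstacle'' without resolving it. That is precisely where the work lies. Your sketch of the diagonal case is fine (Hooley's estimate on equal sums of two cubes, divisor factorisations of $A_i=h_iu_i^2$, and a binary-form bound on $h_1z_1^2+h_2z_2^2=V$ give a contribution well inside $P^{2+\eps}H^3M^4e^{-5}$). But in the off-diagonal case, merely fixing the $(h_i,u_i)$ and invoking a generic bound for the indefinite quaternary quadratic in the $z_i$ overshoots the target by a factor of roughly $H$; one must couple the cubic constraint on the $A_i$ with the quadratic constraint on the $z_i$ more tightly. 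The argument in \cite{Woo1995} does this by isolating a difference $h_iz_i^2-h_jz_j^2$ and invoking the pointwise bound of \cite[Lemma 3.5]{VW1995} for $h z^2-h'z'^2=L$ with $L\ne 0$, together with a further case analysis when the residual expression vanishes (compare the treatment of $N_1,N_2,N_3$ in the proof of Lemma~\ref{lemma2.1} above). As written, your proposal establishes only the first bound; for the second you should either carry out that case analysis in full or, as the paper does, cite \cite[(3.26)]{Woo1995}.
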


\begin{proof} These estimates are given by \cite[equations (3.25) and (3.26)]{Woo1995}.
\end{proof}

Finally, we recall an estimate for the mean value
\begin{equation}\label{3.12}
\Util_s(P,M,R)=\int_0^1\ftil(\alp;P,M,R)^s\d\alp .
\end{equation}

\begin{lemma}\label{lemma3.4}
Suppose that $s>1$ and that $\del_s$ is an associated exponent. Then whenever $P>M$ 
and $R>2$, one has $\Util_s(P,M,R)\ll_s (P/M)^{s/2+\del_s+\eps}$.
\end{lemma}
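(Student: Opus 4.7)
The plan is to adapt the Dirichlet kernel device already used in the proof of Lemma \ref{lemma3.1}, trading the maximum over $m>M$ for an integral against a single universal kernel, and then to apply H\"older's inequality and invoke the hypothesis that $\del_s$ is associated. Throughout I set $Q=P/M$.

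First, I would record the identity
$$\sum_{x\in \calA(P/m,R)}e(\alp x^3)=\int_0^1 f(\alp+\tet;Q,R)\calD_{P/m}(\tet)\d\tet,$$
valid for each $m>M$, which holds because $P/m<Q$ ensures $\calA(P/m,R)\subseteq \calA(Q,R)$ and the kernel $\calD_{P/m}$ selects exactly those summands of cube size at most $(P/m)^3$. Since $\calD_K^*$ is non-decreasing in $K$, one has $|\calD_{P/m}(\tet)|\ll \calD_{P/m}^*(\tet)\le \calD_Q^*(\tet)$ uniformly for $m>M$. This uniformity is the heart of the matter: taking the maximum over $m>M$ then yields the $m$-free majorant
$$\ftil(\alp;P,M,R)\ll \int_0^1|f(\alp+\tet;Q,R)|\calD_Q^*(\tet)\d\tet.$$

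Second, I would apply H\"older's inequality with exponents $s$ and $s/(s-1)$ (legitimate since $s>1$), splitting $\calD_Q^*(\tet)$ as $\calD_Q^*(\tet)^{1/s}\cdot \calD_Q^*(\tet)^{1-1/s}$, to obtain
$$\ftil(\alp;P,M,R)^s\ll \Bigl(\int_0^1\calD_Q^*(\tet)\d\tet\Bigr)^{s-1}\int_0^1|f(\alp+\tet;Q,R)|^s\calD_Q^*(\tet)\d\tet.$$
Integrating in $\alp$ over $[0,1)$, swapping the order of integration, and invoking translation invariance of $\int_0^1|f(\alp;Q,R)|^s\d\alp$ then delivers
$$\Util_s(P,M,R)\ll \Bigl(\int_0^1\calD_Q^*(\tet)\d\tet\Bigr)^{s}U_s(Q,R).$$

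Finally, the bound $\int_0^1\calD_Q^*(\tet)\d\tet\ll \log(2Q)\ll P^\eps$ supplied by (\ref{3.5}), together with the defining property $U_s(Q,R)\ll Q^{s/2+\del_s+\eps}$ of an associated exponent $\del_s$, delivers the target bound on recalling $Q=P/M$. The principal obstacle is the conceptual one of uniformising over $m>M$ in the definition of $\ftil$; once this is handled via the monotonicity of $\calD_K^*$ in $K$, the remainder is a routine H\"older--Fubini manipulation.
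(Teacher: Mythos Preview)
Your argument is correct. The paper does not actually supply a proof of this lemma but merely cites \cite[Lemma 3.2]{Woo1995}; your Dirichlet-kernel argument is precisely the technique already deployed in (\ref{3.6}) of the proof of Lemma \ref{lemma3.1}, and it faithfully reconstructs the cited result.
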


\begin{proof} This is immediate from \cite[Lemma 3.2]{Woo1995}.
\end{proof}

\section{New associated exponents, I: $4\le s\le 6.5$} We now convert the mean value 
estimates of \S2 into new associated exponents by means of the ideas of 
\cite[\S\S2--4]{Woo1995}. Write
\begin{equation}\label{4.1}
\Ome_{d,e,\pi}(P,R;\phi)=\int_0^1|\Ftil_{d,e}(\alp;\pi)\ftil(\alp;P/(de),M/d,\pi)^{s-2}|
\d\alp ,
\end{equation}
and then put
\begin{equation}\label{4.2}
\calU_s(P,R)=\sum_{1\le d\le D}\sum_{\pi\le R}\sum_{1\le e\le Q}
d^{2-s/2}e^{s/2-1}\Ome_{d,e,\pi}(P,R;\phi).
\end{equation}
The relevant results from \cite{Woo1995} are summarised in the following lemma.

\begin{lemma}\label{lemma4.1} Suppose that $s>4$ and $0<\phi\le \tfrac{1}{3}$. Then 
whenever $\mu_{s-2}$ and $\mu_s$ are permissible exponents, and $1\le D\le P^{1/3}$, 
one has
$$U_s(P,R)\ll P^{\mu_s+\eps}D^{s/2-\mu_s}+MP^{1+\mu_{s-2}+\eps}+
P^{\left(\tfrac{s-3}{s-2}\right)\mu_s+\eps}V_s(P,R),$$
where
$$V_s(P,R)=\left( PM^{s-2}Q^{\mu_{s-2}}+M^{s-3}\calU_s(P,R)\right)^{1/(s-2)}.$$
\end{lemma}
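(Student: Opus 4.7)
My plan is to reconstruct the decomposition of \cite[\S\S2--4]{Woo1995}, of which the stated bound is essentially a summary in the current notation. The starting point is the smooth Weyl differencing identity that, after pairing, expresses $|f(\alp;P,R)|^{s}$ as a weighted sum of the mean values $\Ome_{d,e,\pi}(P,R;\phi)$ defined in (4.1), together with error terms. Here the conditioning variables $d$, $e$ and $\pi$ record respectively the scale of the smooth cofactor, the size of the Weyl difference, and the distinguished prime factor; the combinatorial weight $d^{2-s/2}e^{s/2-1}$ built into (4.2) comes from redistributing the $(s-2)$ residual copies of $f$ among the differenced variables, using $\ftil$ in place of $f$ to keep track of the maximal contribution of each dyadic cofactor.

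I would then account for the three terms of the bound as follows. First, introduce the cutoff $D$ and handle $d\le D$ by the trivial estimate $U_s(P,R)\ll P^{\mu_s+\eps}$ inherited from the permissibility of $\mu_s$; the range of $d$ contributes a factor $D^{s/2-\mu_s}$ under the weights in (4.2), producing the first term $P^{\mu_s+\eps}D^{s/2-\mu_s}$. Second, the diagonal contribution to the differencing --- where $x_1=x_2$ and $y_1=y_2$ in the definition (3.1) of $\Ftil_{d,e}(\alp;\pi)$ --- collapses, after eliminating the $u$-summation, to a bound of the shape $M\cdot U_{s-2}(P,R)$, which the permissibility of $\mu_{s-2}$ controls by $MP^{1+\mu_{s-2}+\eps}$.

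For the main contribution, captured in $\calU_s(P,R)$, I would apply H\"older's inequality to each $\Ome_{d,e,\pi}$, separating off $(s-3)$ of the $(s-2)$ copies of $\ftil$ and bounding them by $U_s^{(s-3)/(s-2)}\ll P^{(s-3)\mu_s/(s-2)+\eps}$. The remaining factor, involving $\Ftil_{d,e}$ paired with a single $\ftil$, is estimated in two alternative ways: either via Lemma 3.4 applied to $\Util_{s-2}$, which yields $PM^{s-2}Q^{\mu_{s-2}}$ after invoking the scale relations (2.1); or by retaining the integral and folding it back into the definition of $\calU_s$ itself, which yields $M^{s-3}\calU_s(P,R)$. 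Taking the $(s-2)$-th root of the sum of these two alternatives produces exactly $V_s(P,R)$, and the overall prefactor $P^{(s-3)\mu_s/(s-2)+\eps}$ emerges from the H\"older split.

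The main obstacle --- and the only substantive step beyond notational reorganisation --- is the careful bookkeeping of H\"older exponents against the weights $d^{2-s/2}e^{s/2-1}$ and the scale parameters $M,H,Q$ from (2.1), so that the summations over $d,e,\pi$ converge absolutely and deliver the advertised combinations. Since each manipulation is present in \cite[\S\S2--4]{Woo1995} in the same form or as a direct variant, the verification reduces to transcribing those arguments with the current notation, checking along the way that the hypotheses $1\le D\le P^{1/3}$ and $0<\phi\le \tfrac{1}{3}$ fit within the ranges imposed there.
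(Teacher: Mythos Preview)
The paper's own proof is a bare citation: the bound is obtained by substituting \cite[Lemma 3.3]{Woo1995} into \cite[Lemma 2.3]{Woo1995}. Your plan correctly identifies three contributions, but two of the mechanisms you describe would not deliver the stated terms.

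\emph{First term.} You attribute $P^{\mu_s+\eps}D^{s/2-\mu_s}$ to the range $d\le D$ handled trivially. This is reversed. The sum (4.2) defining $\calU_s(P,R)$ already restricts to $1\le d\le D$, and that range is precisely what feeds into the \emph{third} term via $V_s$. The first term comes instead from the complementary range $d>D$: there every variable lives at scale $P/d$, so one invokes $U_s(P/d,R)\ll (P/d)^{\mu_s+\eps}$ directly, and the weighted tail $\sum_{d>D}d^{s/2-1-\mu_s}$ converges (since $\mu_s>s/2$) to produce the saving $D^{s/2-\mu_s}$. Handling $d\le D$ trivially would give no saving at all.

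\emph{Third term.} You propose applying H\"older inside each $\Ome_{d,e,\pi}$, stripping off $s-3$ of the $s-2$ copies of $\ftil$ and bounding them by $U_s(P,R)^{(s-3)/(s-2)}\ll P^{(s-3)\mu_s/(s-2)}$. This is at the wrong scale: by definition $\ftil(\alp;P/(de),M/d,\pi)$ is a sum over $x\in\calA(P/(dem),R)$ with $m>M/d$, so its moments are controlled at scale $Q/e\le Q=P/M$ (Lemma 3.4), and any H\"older on those factors yields powers of $Q^{\mu_\bullet}$, not $P^{\mu_s}$. In \cite{Woo1995} the prefactor $U_s(P,R)^{(s-3)/(s-2)}$ is extracted already in Lemma 2.3, by a H\"older step applied while the $s-2$ undifferenced copies of $f$ are still at the original scale $P$ --- that is, \emph{before} the passage to the rescaled maximal sums $\ftil$. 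What survives that step is a single mean value $T_s$, and it is \cite[Lemma 3.3]{Woo1995} that then bounds $T_s$ by $PM^{s-2}Q^{\mu_{s-2}}+M^{s-3}\calU_s(P,R)$; the integrals $\Ome_{d,e,\pi}$ enter only at this second stage, with all $s-2$ copies of $\ftil$ intact. Your two alternative estimates for the residual factor do match the two summands inside $V_s(P,R)^{s-2}$, but they belong one level up in the argument, not inside $\Ome_{d,e,\pi}$.
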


\begin{proof} The desired result follows at once on substituting the conclusion of 
\cite[Lemma 3.3]{Woo1995} into that of \cite[Lemma 2.3]{Woo1995}.
\end{proof}

We are now equipped to announce our new associated exponents.

\begin{lemma}\label{lemma4.2} Suppose that $s\ge 4$ and $0\le \gam\le \tfrac{1}{4}$, 
and let $t$ satisfy
\begin{equation}\label{4.3}
\frac{2s-6+8\gam}{1+2\gam}\le t\le \frac{2s-4}{1+2\gam}.
\end{equation}
Suppose that $\del_{s-2}$ and $\del_t$ are associated exponents, and put
\begin{equation}\label{4.4}
\tet_0=\frac{2s-4-t+2(s-2)\del_t-2t\del_{s-2}}
{6s-12+t-4\gam t+2(s-2)\del_t-2t\del_{s-2}}.
\end{equation}
Then the exponent $\del_s=\del_{s-2}(1-\tet)+\tfrac{1}{2}(s-2)\tet$ is associated, where 
we write $\tet=\max\{0,\min\{\tet_0,\tfrac{1}{3}\}\}$.
\end{lemma}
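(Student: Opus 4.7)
\emph{Strategy.} The plan is to produce a new bound for $\calU_s(P,R)$ by estimating each $\Ome_{d,e,\pi}(P,R;\phi)$ via a four-factor H\"older decomposition that deploys the auxiliary mean values of \S3, and then to feed the resulting estimate into Lemma \ref{lemma4.1}, optimising over the parameter $\phi$.

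\emph{H\"older setup.} I decompose
$$|\Ftil_{d,e}(\alp;\pi)|\,\ftil(\alp;P/(de),M/d,\pi)^{s-2}=A_1^{\lam_1}A_2^{\gam}A_3^{\lam_3}A_4^{\lam_4},$$
with $A_1=|\Ftil_{d,e}(\alp;\pi)|^2$, $A_2=|\Ftil_{d,e}(\alp;\pi)|^4$, $A_3=|\Ftil_{d,e}(\alp;\pi)|^2\ftil^2$, $A_4=\ftil^t$, and non-negative weights $\lam_1,\gam,\lam_3,\lam_4$ summing to $1$. Matching the exponents of $|\Ftil_{d,e}|$ and $\ftil$ forces
$$\lam_4=\frac{1+2\gam}{2},\quad \lam_3=\frac{2s-4-t(1+2\gam)}{4},\quad \lam_1=\frac{(1+2\gam)t-(2s-6+8\gam)}{4},$$
and the constraints $\lam_1,\lam_3\ge 0$ are precisely the range (\ref{4.3}) for $t$. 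H\"older's inequality then yields
$$\Ome_{d,e,\pi}(P,R;\phi)\le \left(\Lam_{d,e,\pi}^{(1)}\right)^{\lam_1}\left(\Lam_{d,e,\pi}^{(2)}\right)^{\gam}\Ups_{d,e,\pi}^{\lam_3}\Util_t(P/(de),M/d,\pi)^{\lam_4}.$$

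\emph{Substitution and optimisation.} Substituting the bounds of Lemmas \ref{lemma3.2}, \ref{lemma3.3} and \ref{lemma3.4} (and using $H=PM^{-3}$, $Q=P/M$), I collect powers to obtain an estimate of shape $\Ome_{d,e,\pi}(P,R;\phi)\ll P^{E(\phi)+\eps}d^{4\lam_3/t}e^{-\sig}$ with $E(\phi)$ affine in $\phi$ and $\sig$ large enough that, after combining with the weight $e^{s/2-1}$ of (\ref{4.2}), the summations over $d\le D$, primes $\pi\le R$ and $e\le Q$ are routine, producing $\calU_s(P,R)\ll P^{\Psi(\phi)+\eps}$ with $\Psi(\phi)$ explicit. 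I then insert this into Lemma \ref{lemma4.1} alongside $\mu_{s-2}=(s-2)/2+\del_{s-2}$ and the candidate $\mu_s=s/2+\del_s$. The requirement $U_s(P,R)\ll P^{\mu_s+\eps}$ reduces to three affine-in-$\phi$ exponent inequalities; two of these---one from the term $MP^{1+\mu_{s-2}}$ and one from $M^{s-3}\calU_s(P,R)$ in Lemma \ref{lemma4.1}---are the binding constraints and pull $\phi$ in opposite directions, while the third is implied. Balancing the two binding inequalities yields a single linear equation in $\phi$ whose solution is exactly $\tet_0$ as displayed in (\ref{4.4}). With $\tet=\max\{0,\min\{\tet_0,\tfrac{1}{3}\}\}$ all three inequalities hold simultaneously, confirming that $\del_s=(1-\tet)\del_{s-2}+\tfrac{1}{2}(s-2)\tet$ is associated.

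\emph{Principal obstacle.} The substantive work is algebraic bookkeeping: carefully tracking the contributions of $P$, $M$, $d$ and $e$ that arise from the H\"older combination under the weights $\lam_1,\gam,\lam_3,\lam_4$; carrying out the summation over $d$, $e$ and $\pi$ via (\ref{4.2}); and verifying that the balance equation in $\phi$ simplifies on the nose to (\ref{4.4}), including the presence of the term $-4\gam t$ in the denominator, which arises from the $\lam_1$-weighted $\Lam_{d,e,\pi}^{(1)}$ contribution interacting with the other three factors. Boundary cases---where $\tet_0$ exits $[0,\tfrac{1}{3}]$, or where $\lam_1$ or $\lam_3$ vanishes at an endpoint of (\ref{4.3})---require small separate checks but introduce no new obstruction.
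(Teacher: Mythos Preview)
Your proposal matches the paper's argument in all essential respects: the four-factor H\"older split with weights $(\lam_1,\gam,\lam_3,\lam_4)$ is exactly the paper's $(\gam_3,\gam,\gam_1,\gam_2)$ under relabeling, the range (\ref{4.3}) arises from non-negativity of these weights, and the subsequent substitution of Lemmata \ref{lemma3.2}--\ref{lemma3.4} followed by the appeal to Lemma \ref{lemma4.1} is the same route the paper takes.

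One point needs correction. The constraint that balances against the $M^{s-3}\calU_s(P,R)$ contribution is \emph{not} the isolated term $MP^{1+\mu_{s-2}}$ in the display of Lemma \ref{lemma4.1}, but rather the companion summand $PM^{s-2}Q^{\mu_{s-2}}$ sitting inside $V_s(P,R)^{s-2}$. The paper writes this as $\Psi_1=PMQ^{\mu_{s-2}}$ and equates $\Psi_1=\Psi_2$ (the normalised $\calU_s$ bound) to obtain (\ref{4.4}). The term $MP^{1+\mu_{s-2}}$ has $P$-exponent $1+\mu_{s-2}+\phi$, while $PM^{s-2}Q^{\mu_{s-2}}$ has $P$-exponent $1+(s-2)\phi+\mu_{s-2}(1-\phi)$; since $\mu_{s-2}\le s-3$ for all $s\ge 4$ (equivalently $\del_{s-2}\le (s-4)/2$, which follows from $\mu_4\le 2$), the latter dominates. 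Balancing the weaker term $MP^{1+\mu_{s-2}}$ against $\Psi_2$ would produce a value of $\phi$ strictly larger than $\tet_0$, at which the neglected constraint $PM^{s-2}Q^{\mu_{s-2}}\le P^{\mu_s}$ fails; so the claimed reduction to (\ref{4.4}) would not go through as written. With this localisation of the balance corrected, the sketch is sound.
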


\begin{proof} We begin by estimating the mean value $\Ome_{d,e,\pi}(P,R;\phi)$. 
Suppose that
$$d\le M,\quad e\le \min\{Q,Hd^2\},\quad \pi\le R\quad \text{and}\quad 0\le \phi\le 
\tfrac{1}{3}.$$
Then on recalling (\ref{3.4}), (\ref{3.11}) and (\ref{3.12}), an application of H\"older's 
inequality to (\ref{4.1}) yields the bound
\begin{align}
\Ome_{d,e,\pi}(P,R;\phi)\le &\, \Ups_{d,e,\pi}(P,R;\phi)^{\gam_1}
\Util_t(P/(de),M/d,\pi)^{\gam_2}\notag \\
&\, \times \Lam_{d,e,\pi}^{(1)}(P,R;\phi)^{\gam_3}
\Lam_{d,e,\pi}^{(2)}(P,R;\phi)^\gam,
\label{4.5}
\end{align}
where
$$\gam_1=\tfrac{1}{4}(2s-4-t-2t\gam),\quad \gam_2=(s-2-2\gam_1)/t\quad 
\text{and}\quad \gam_3=\tfrac{1}{2}-\gam_1-2\gam .$$

\par A few words are in order to confirm that the above is indeed a valid application of 
H\"older's inequality. Observe first that the hypotheses $s>4$ and 
$0\le \gam\le \tfrac{1}{4}$, together with those concerning the value $t$, ensure that
$$2s-6+8\gam\le t(1+2\gam)\le 2s-4,$$
so that
$$0\le \gam_1\le \tfrac{1}{4}\left( (2s-4)-(2s-6+8\gam)\right) =
\tfrac{1}{2}(1-4\gam)\le 1.$$
Hence we deduce that
$$0=\tfrac{1}{2}-\tfrac{1}{2}(1-4\gam)-2\gam\le \gam_3\le \tfrac{1}{2}-2\gam<1.$$
Also, since $s\ge 4$ and $\gam_1\le \tfrac{1}{2}(1-4\gam)$, one finds that
$$\gam_2\ge (s-3+4\gam)/t>0.$$
Moreover, since $t\ge (2s-6+8\gam)/(1+2\gam)$, we have
$$(1+2\gam)(s-2-2\gam_1-t)\le 4-s-2\gam_1-\gam(12+4\gam_1-2s).$$
When $4\le s\le 6$, we therefore deduce that
$$t(1+2\gam)(\gam_2-1)\le 4-s-2\gam_1\le 0,$$
and when $s>6$ we see instead that
$$t(1+2\gam)(\gam_2-1)\le 4-s-2\gam_1+\tfrac{1}{4}(2s-12)\le 1-\tfrac{1}{2}s\le 0.$$
Thus, in all circumstances, one has $0\le \gam_2\le 1$. Finally, the relations
\begin{equation}\label{4.6}
\gam+\gam_1+\gam_2+\gam_3=1,\quad 4\gam+2\gam_1+2\gam_3=1\quad 
\text{and}\quad 2\gam_1+t\gam_2=s-2
\end{equation}
follow by direct computation.\par

By applying Lemmata \ref{lemma3.2}--\ref{lemma3.4}, we deduce from (\ref{4.5}) that
\begin{align*}
\Ome_{d,e,\pi}(P,R;\phi)\ll &\, P^\eps 
\left( d^{4/t}e^{-3-2/t}PMH^{1+2/t}Q^{1+2\del_t/t}\right)^{\gam_1}\\
&\times (PMHe^{-2})^{\gam_3}(P^2M^4H^3e^{-5})^{\gam}
\left( (Q/e)^{t/2+\del_t}\right)^{\gam_2}.
\end{align*}
Thus, by making use of the relations (\ref{4.6}) and
$$t\ge 2,\quad \gam_1\le \tfrac{1}{2},\quad 3\gam_1+
\tfrac{1}{2}t\gam_2+2\gam_3+5\gam\ge \tfrac{1}{2}s,\quad 2\gam_1+t\gam=s-2-
\tfrac{1}{2}t,$$
we deduce that
\begin{equation}\label{4.7}
\Ome_{d,e,\pi}(P,R;\phi)\ll de^{-s/2}P^{1/2+\eps}M^{1/2+2\gam}H^{(s-2)/t}
Q^{s/2-1+(s-2)\del_t/t}.
\end{equation}

\par When $e>Hd^2$, one has $F_{d,e}(\alp)=0$, and hence 
$\Ome_{d,e,\pi}(P,R;\phi)=0$. Thus, on substituting (\ref{4.7}) into (\ref{4.2}), we 
discern that
$$\calU_s(P,R)\ll P^{1/2+\eps}M^{1/2+2\gam}H^{(s-2)/t}
Q^{s/2-1+(s-2)\del_t/t}\Sig_0,$$
where
$$\Sig_0=\sum_{1\le d\le D}\sum_{\pi\le R}\sum_{1\le e\le \min\{Q,Hd^2\}}
d^{3-s/2}e^{-1}.$$
We therefore conclude that
$$\calU_s(P,R)\ll D^2P^{1/2+2\eps}M^{1/2+2\gam}H^{(s-2)/t}
Q^{s/2-1+(s-2)\del_t/t}.$$
In the notation of Lemma \ref{lemma4.1}, therefore, we have
$$V_s(P,R)^{s-2}\ll P^\eps M^{s-3}(\Psi_1+D^2\Psi_2),$$
where
$$\Psi_1=PMQ^{\mu_{s-2}}\quad \text{and}\quad 
\Psi_2=P^{1/2}M^{1/2+2\gam}H^{(s-2)/t}Q^{s/2-1+(s-2)\del_t/t}.$$
On recalling (\ref{2.1}) and the definition of an associated exponent, the equation 
$\Psi_1=\Psi_2$ implicitly determines a linear equation for $\phi$, namely
\begin{align*}
1+&\phi+\left( \tfrac{1}{2}(s-2)+\del_{s-2}\right) (1-\phi)\\
&=\tfrac{1}{2}+\left(\tfrac{1}{2}+2\gam\right)\phi+\Bigl(\frac{s-2}{t}\Bigr)(1-3\phi)+
\Bigl( \tfrac{1}{2}(s-2)+\Bigl(\frac{s-2}{t}\Bigr)\del_t\Bigr) (1-\phi).
\end{align*}
A modicum of computation reveals that this equation has solution $\phi=\tet_0$, where 
$\tet_0$ is given by (\ref{4.4}). Put $D=P^\ome$, where $\ome$ is any sufficiently 
small, but fixed, positive number. Then we may follow the discussion of 
\cite[\S4]{Woo1995} to confirm via Lemma \ref{lemma4.1} that whenever 
$\mu_{s-2}=\tfrac{1}{2}(s-2)+\del_{s-2}$ and $\mu_t=\tfrac{1}{2}t+\del_t$ are 
permissible exponents, then so too is
$$\mu_s=\mu_{s-2}(1-\tet)+1+(s-2)\tet.$$
It follows that the exponent $\del_s=\del_{s-2}(1-\tet)+\tfrac{1}{2}(s-2)\tet $ is 
associated, completing the proof of the lemma.
\end{proof}

We highlight three special cases of Lemma \ref{lemma4.2} for future use.

\begin{corollary}\label{corollary4.3} Suppose that $4<s\le 5$. Then whenever 
$\del_{2s-4}\le 2$ is an associated exponent, so too is $\del_s=\tfrac{1}{2}(s-2)\tet$, 
where
$$\tet=\frac{\del_{2s-4}}{4+\del_{2s-4}}.$$
\end{corollary}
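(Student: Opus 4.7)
The plan is to derive Corollary 4.3 by specialising Lemma 4.2 to the case $\gam=0$, $t=2s-4$, and $\del_{s-2}=0$, so the whole argument collapses to a parameter check and one routine interpolation.

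First, I would verify that $\del_{s-2}=0$ is an associated exponent whenever $4<s\le 5$, which is needed to invoke Lemma 4.2 with that value. Since $s-2\in (2,3]$, an application of H\"older's inequality with $\tet=(6-s)/2\in [\tfrac{1}{2},1)$ yields
$$U_{s-2}(P,R)\le U_2(P,R)^{\tet}U_4(P,R)^{1-\tet},$$
and together with the trivial bound $U_2(P,R)\ll P$ (an orthogonality identity) and Hua's bound $U_4(P,R)\ll P^{2+\eps}$, one obtains $U_{s-2}(P,R)\ll P^{(s-2)/2+\eps}$. Hence $\del_{s-2}=0$ is permissible.

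Next, I would check that with $\gam=0$ and $t=2s-4$ the hypotheses of Lemma \ref{lemma4.2} are satisfied: the constraint (\ref{4.3}) reads $2s-6\le t\le 2s-4$, which holds with equality on the right. Substituting $\del_{s-2}=0$ and $t=2s-4$ into the defining formula (\ref{4.4}) for $\tet_0$, the numerator collapses to $2(s-2)\del_{2s-4}$ and the denominator to $8(s-2)+2(s-2)\del_{2s-4}$, giving
$$\tet_0=\frac{\del_{2s-4}}{4+\del_{2s-4}}.$$
The hypothesis $\del_{2s-4}\le 2$ is precisely the condition ensuring $\tet_0\le \tfrac{1}{3}$, so the truncation $\tet=\max\{0,\min\{\tet_0,\tfrac{1}{3}\}\}$ in Lemma \ref{lemma4.2} delivers $\tet=\tet_0$.

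Finally, inserting $\del_{s-2}=0$ into the conclusion $\del_s=\del_{s-2}(1-\tet)+\tfrac{1}{2}(s-2)\tet$ of Lemma \ref{lemma4.2} yields $\del_s=\tfrac{1}{2}(s-2)\tet$ with the prescribed $\tet$, which is precisely the statement of the corollary. No genuine obstacle arises: the entire proof is a parameter calculation, and the only subtle point worth highlighting is that the inequality $\del_{2s-4}\le 2$ is exactly what rules out the truncation $\tet=\tfrac{1}{3}$, so that one may cleanly work with $\tet_0$ rather than the boundary value.
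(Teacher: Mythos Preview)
Your proposal is correct and follows essentially the same approach as the paper: both specialise Lemma \ref{lemma4.2} with $\gam=0$ and $t=2s-4$, invoke Hua's lemma to justify $\del_{s-2}=0$ for $s-2\in(2,3]$, and then reduce the computation of $\tet_0$ and the verification $0\le\tet_0\le\tfrac{1}{3}$ to the stated parameter check. The only cosmetic difference is that you make the interpolation between $U_2$ and $U_4$ explicit, whereas the paper simply asserts that $\del_u=0$ is associated for $0<u\le 4$ as a direct consequence of Hua's lemma.
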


\begin{proof} We take $\gam=0$ and $t=2s-4$, so that $\gam$ and $t$ satisfy 
(\ref{4.3}). It follows from Hua's lemma \cite[Lemma 2.5]{Vau1997} that
$$\int_0^1|f(\alp;Q,R)|^4\d\alp \ll Q^{2+\eps},$$
and hence one may take $\del_u=0$ for $0<u\le 4$. With these choices of $s$, 
$\gam$ and 
$t$, one finds that $\del_{s-2}=0$, and hence (\ref{4.4}) gives
$$\tet_0=\frac{2(s-2)\del_t}{8s-16+2(s-2)\del_t}=\frac{\del_{2s-4}}{4+\del_{2s-4}}.$$
But $0\le \del_{2s-4}\le 2$, and hence $0\le \tet_0\le \tfrac{1}{3}$. The conclusion of the 
corollary is now immediate from Lemma \ref{lemma4.2}.
\end{proof}

\begin{corollary}\label{corollary4.4} Suppose that $5\le s\le 6$. Then whenever 
$\del_6\le \tfrac{3}{2}$ is an associated exponent, so too is 
$\del_s=\tfrac{1}{2}(s-2)\tet$, where
$$\tet=\frac{s-5+(s-2)\del_6}{3s-3+(s-2)\del_6}.$$
\end{corollary}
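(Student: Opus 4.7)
The plan is to apply Lemma~\ref{lemma4.2} with the specific choices $\gam=0$ and $t=6$, so that $\del_t=\del_6$ enters through the main formula \eqref{4.4}. The argument is a direct specialization, so no new analytic input is required beyond the mean value estimates already packaged into Lemma~\ref{lemma4.2}; the work lies entirely in verifying that all of the side conditions hold.

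First, I would check the admissibility constraint \eqref{4.3} on $t$. With $\gam=0$, this reduces to $2s-6\le t\le 2s-4$, and the choice $t=6$ is consistent with this precisely when $5\le s\le 6$, which is exactly our range. Next, since $5\le s\le 6$ forces $s-2\in [3,4]$, Hua's lemma \cite[Lemma 2.5]{Vau1997} — invoked in the proof of Corollary~\ref{corollary4.3} — gives $\del_{s-2}=0$ as a permissible associated exponent. So both $\del_{s-2}=0$ and the hypothesized $\del_6$ are in play, and I have an associated exponent $\del_t$ to feed into \eqref{4.4}.

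Substituting $t=6$, $\gam=0$ and $\del_{s-2}=0$ into \eqref{4.4}, the numerator becomes $2s-10+2(s-2)\del_6$ and the denominator becomes $6s-6+2(s-2)\del_6$. Cancelling a factor of $2$ yields
$$\tet_0=\frac{s-5+(s-2)\del_6}{3s-3+(s-2)\del_6},$$
which is precisely the $\tet$ appearing in the statement. The conclusion of Lemma~\ref{lemma4.2} then gives that $\del_s=\del_{s-2}(1-\tet)+\tfrac{1}{2}(s-2)\tet=\tfrac{1}{2}(s-2)\tet$ is associated, provided I can show that $0\le \tet_0\le \tfrac{1}{3}$, so that the truncation $\tet=\max\{0,\min\{\tet_0,\tfrac{1}{3}\}\}$ is vacuous.

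The remaining step, which I expect to be the only mild pressure point, is the verification $\tet_0\le \tfrac{1}{3}$; the lower bound $\tet_0\ge 0$ is immediate from $s\ge 5$ and $\del_6\ge 0$. Clearing denominators, $3\tet_0\le 1$ is equivalent to $3(s-5)+3(s-2)\del_6\le 3s-3+(s-2)\del_6$, i.e., to $2(s-2)\del_6\le 12$, or $(s-2)\del_6\le 6$. Since $s\le 6$ gives $s-2\le 4$, the hypothesis $\del_6\le \tfrac{3}{2}$ yields $(s-2)\del_6\le 4\cdot \tfrac{3}{2}=6$, as required. This completes the verification, and the corollary follows from Lemma~\ref{lemma4.2}.
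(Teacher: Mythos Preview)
Your proof is correct and follows essentially the same route as the paper: both take $\gam=0$, $t=6$, note that $\del_{s-2}=0$ via Hua's lemma, and specialize \eqref{4.4} to obtain the stated $\tet_0$. The only cosmetic difference is in the verification $\tet_0\le \tfrac{1}{3}$, where the paper bounds $\tet_0$ by its value at $s=6$ and checks $\tfrac{1+4\del_6}{15+4\del_6}\le \tfrac{1}{3}$, while you clear denominators directly to reach the equivalent inequality $(s-2)\del_6\le 6$.
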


\begin{proof} We take $\gam=0$ and $t=6$, so that $s$, $\gam$ and $t$ satisfy 
(\ref{4.3}). We again have $\del_u=0$ for $0<u\le 4$, and hence $\del_{s-2}=0$. 
Hence (\ref{4.4}) gives
$$\tet_0=\frac{2s-10+2(s-2)\del_6}{6s-6+2(s-2)\del_6}=
\frac{s-5+(s-2)\del_6}{3s-3+(s-2)\del_6}.$$
But by hypothesis, one has $0\le \del_6\le \tfrac{3}{2}$ and $5\le s\le 6$, and hence
$$0\le \tet_0\le \frac{1+4\del_6}{15+4\del_6}\le \tfrac{1}{3}.$$
The conclusion of the corollary therefore follows from Lemma \ref{lemma4.2}.
\end{proof}

\begin{corollary}\label{corollary4.5} Suppose that $6\le s\le \tfrac{13}{2}$. Then 
whenever $\del_{s-2}\le \del_6\le \tfrac{1}{2}$ is an associated exponent, so too is 
$\del_s=\del_{s-2}(1-\tet)+\tfrac{1}{2}(s-2)\tet$, where
$$\tet=\frac{s-5+(s-2)\del_6-6\del_{s-2}}{33-3s+(s-2)\del_6-6\del_{s-2}}.$$
\end{corollary}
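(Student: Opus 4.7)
The plan is to derive Corollary \ref{corollary4.5} as a direct specialisation of Lemma \ref{lemma4.2}, continuing the pattern of the previous two corollaries. Inspection of the target formula for $\tet$ shows that its numerator depends on $\del_6$ only through $(s-2)\del_6$, which matches the term $2(s-2)\del_t$ appearing in (\ref{4.4}) precisely when $t=6$. The denominator then pins down $\gam$: comparing the stated $33-3s+(s-2)\del_6-6\del_{s-2}$ with $6s-12+t-4\gam t+2(s-2)\del_t-2t\del_{s-2}$ forces $6s-6-24\gam=66-6s$, hence $\gam=(s-6)/2$.

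First I would check that these choices lie within the hypotheses of Lemma \ref{lemma4.2}. Since $6\le s\le \tfrac{13}{2}$, we have $0\le \gam\le \tfrac{1}{4}$. For the constraint (\ref{4.3}), one notes that $1+2\gam=s-5$, so the lower bound becomes $(2s-6+8\gam)/(s-5)=(6s-30)/(s-5)=6$ exactly, while the upper bound $(2s-4)/(s-5)$ is a strictly decreasing function of $s$ on $[6,\tfrac{13}{2}]$ whose value at the right endpoint is precisely $6$. Hence $t=6$ is admissible throughout. Substituting $t=6$ and $\gam=(s-6)/2$ into (\ref{4.4}) and cancelling a common factor of $2$ from the numerator and denominator then yields exactly the stated expression for $\tet_0$.

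What remains is to verify that $\tet_0\in [0,\tfrac{1}{3}]$, so that the clipping in Lemma \ref{lemma4.2} is inactive and $\tet=\tet_0$; the conclusion that $\del_s=\del_{s-2}(1-\tet)+\tfrac{1}{2}(s-2)\tet$ is an associated exponent then follows directly. For positivity, the hypothesis $\del_{s-2}\le \del_6$ gives $(s-2)\del_6-6\del_{s-2}\ge (s-8)\del_{s-2}$, and using $\del_{s-2}\le \tfrac{1}{2}$ together with $s-8<0$ this is at least $(s-8)/2$, so the numerator is at least $(s-5)+(s-8)/2=3(s-6)/2\ge 0$. For the upper bound, cross-multiplying reduces $\tet_0\le \tfrac{1}{3}$ to $3s-24+(s-2)\del_6-6\del_{s-2}\le 0$; here $3s-24\le -\tfrac{9}{2}$ on the range, $(s-2)\del_6\le \tfrac{9}{4}$, and $-6\del_{s-2}\le 0$, which closes the bound comfortably. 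The main obstacle is just the bookkeeping involved in identifying the correct parameters and confirming admissibility; no new estimation is needed beyond what Lemma \ref{lemma4.2} already supplies.
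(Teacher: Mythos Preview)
Your proposal is correct and follows the same approach as the paper: specialise Lemma~\ref{lemma4.2} with $t=6$ and $\gam=\tfrac{1}{2}(s-6)$, verify that these satisfy (\ref{4.3}), substitute into (\ref{4.4}), and check $0\le\tet_0\le\tfrac{1}{3}$. Your bounds for the range of $\tet_0$ differ slightly in detail from the paper's but are arguably cleaner; the only point worth making explicit is that the denominator $33-3s+(s-2)\del_6-6\del_{s-2}$ is positive (e.g.\ $\ge 33-3s+(s-8)\del_6\ge 33-3s+\tfrac{1}{2}(s-8)>0$ on the range), which you rely on both for the nonnegativity conclusion and for the cross-multiplication.
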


\begin{proof} We take $\gam=\tfrac{1}{2}(s-6)$, so that when $6\le s\le \tfrac{13}{2}$, 
one has $0\le \gam\le \tfrac{1}{4}$, and in addition
$$\frac{2s-6+8\gam}{1+2\gam}=6\quad \text{and}\quad 
\frac{2s-4}{1+2\gam}=2+\frac{6}{s-5}\ge 6.$$
We are therefore entitled to apply Lemma \ref{lemma4.2} with $t=6$, in which case
\begin{align*}
\tet_0&=\frac{2s-10+2(s-2)\del_6-12\del_{s-2}}
{6s-6-12(s-6)+2(s-2)\del_6-12\del_{s-2}}\\
&=\frac{s-5+(s-2)\del_6-6\del_{s-2}}{33-3s+(s-2)\del_6-6\del_{s-2}}.
\end{align*}
By hypothesis, we have $6\le s\le \tfrac{13}{2}$ and $\del_{s-2}\le \del_6\le 
\tfrac{1}{2}$, and hence
$$\tet_0\ge \frac{s-5-2\del_6}{2s+3+(s-2)\del_6-6\del_{s-2}}\ge 
\frac{s-6}{2s+3+(s-2)\del_6-6\del_{s-2}}\ge 0,$$
and
$$\tet_0\le \frac{s-5+\tfrac{9}{2}\del_6}{13-2\del_6}\le \frac{\tfrac{3}{2}+
\tfrac{9}{4}}{12}<\tfrac{1}{3}.$$
The conclusion of the corollary therefore follows from Lemma \ref{lemma4.2}.
\end{proof}

\section{New associated exponents, II: $s=6$ and $6.5<s\le 8$} We turn next to 
methods yielding associated exponents when $s=6$, and when $s>6.5$, beginning with 
one generalising that of \cite[Lemma 2.2]{Woo2000b}.

\begin{lemma}\label{lemma5.1} Let $t$ be a real number with $4<t\le 8$. Then 
whenever $\del_6\le \frac{2}{3}$ and $\del_t\le \frac{1}{6}(t-4)$ are associated 
exponents, then so too is
\begin{equation}\label{5.1}
\del_6^\prime =2\max \left\{ \frac{8-t+8\del_t}{24+t+8\del_t},\frac{\del_6}
{4+\del_6}\right\}.
\end{equation}
Moreover, one has
\begin{equation}\label{5.2}
\int_0^1|F(\alp;P)^2f(\alp;P,R)^4|\d\alp \ll P^{3+\del_6^\prime+\eps}.
\end{equation}
\end{lemma}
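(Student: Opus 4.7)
I would prove Lemma \ref{lemma5.1} by adapting the iterative bootstrap of \S4 (specifically Lemma \ref{lemma4.2} and its corollaries) to the case $s=6$, simultaneously tracking the mixed mean value $\int_0^1|F^2f^4|\d\alp$ alongside the sixth moment $U_6(P,R)$. The two branches inside the maximum in (\ref{5.1}) correspond to two different parameter optimizations within this bootstrap.

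The first branch, $(8-t+8\del_t)/(24+t+8\del_t)$, would follow from a direct application of Lemma \ref{lemma4.2} with $s=6$, $\gam=0$, and $t$ a free parameter. Since Hua's lemma gives $\del_4=0$, the formula (\ref{4.4}) specializes to $\tet_0=(8-t+8\del_t)/(24+t+8\del_t)$; the hypothesis $\del_t\le(t-4)/6$ forces $\tet_0\le 1/3$, so $\phi=\tet_0$ is admissible as a Weyl-differencing parameter, and Lemma \ref{lemma4.2} delivers the associated exponent $\del_6'=2\tet_0$. The second branch, $2\del_6/(4+\del_6)$, I would obtain by a complementary self-referential argument, using the hypothesized $\del_6$ as input in place of $\del_t$: reorganize the H\"older decomposition of $\Ome_{d,e,\pi}$ from (\ref{4.5}) so that the factor $\Util_t(\cdot)$ is instead bounded via Lemma \ref{lemma2.3} with its parameter tuned to feed back the assumed $\del_6$. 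Balancing $\phi$ should produce an implicit linear equation whose solution is $\tet_0=\del_6/(4+\del_6)$, mirroring the pattern of Corollary \ref{corollary4.3} (where $\tet=\del_{2s-4}/(4+\del_{2s-4})$ appears at $s=5$).

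For the mixed-moment bound (\ref{5.2}), I would re-run the entire chain of estimates with the integrand $|F^2f^4|$ in place of $|f|^6$. The key observation is that in the bootstrap only one factor of $|f|^2$ is subjected to Weyl differencing (to produce the $\Ftil_{d,e}$ sums); the surviving factors are handled only through sup-norm estimates and integrated moment bounds, and the classical Weyl sum $F(\alp;P)$ fits into this framework at least as well as the smooth sum $f$. Specifically, the diagonal contribution $\int|F|^2|f|^2\d\alp\ll P^{2+\eps}$ matches the bound for $\int|f|^4\d\alp$ via a standard divisor estimate; the minor-arc sup-norm bound $\sup_\grm|F|\ll P^{3/4+\eps}$ from Weyl's inequality is strictly sharper than what is available for $f$; and the major-arc approximations to $|F|^2$ enjoy the same quality as those to $|f|^2$. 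Swapping a pair of $|f|$-factors for $|F|$-factors therefore preserves (or improves) the bound, yielding (\ref{5.2}).

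The main technical obstacle I anticipate is rigorously establishing the second branch: constructing the self-referential H\"older decomposition so that the implicit equation for $\phi$ solves cleanly to $\del_6/(4+\del_6)$, and verifying compatibility with the range constraints $\del_6\le 2/3$ and $\del_t\le(t-4)/6$. The first branch is essentially a specialization of Lemma \ref{lemma4.2}, and the extension (\ref{5.2}) is a straightforward consequence of the favourable analytic properties of $F(\alp;P)$ relative to $f(\alp;P,R)$ throughout the Hardy-Littlewood dissection.
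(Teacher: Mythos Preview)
Your route diverges from the paper's in a fundamental way, and in its present form it has real gaps.

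The paper does \emph{not} prove Lemma \ref{lemma5.1} through the \S4 machinery at all. Instead it works directly with the mixed moment $\int_0^1|F^2f^4|\d\alp$, invoking \cite[inequality (5.3)]{Woo1995} to bound it by $P^\eps M^3(PMQ^2+I([0,1)))$, where $I(\grB)=\int_\grB|F_1(\alp)f(\alp;2Q,R)^4|\d\alp$, and then splits $I$ over minor and major arcs. The first branch in (\ref{5.1}) is the minor-arc contribution: H\"older against $U_t(2Q,R)$ together with the pointwise Weyl bound $\sup_{\grm}|F_1|\ll P^\eps(PM)^{1/2}H$ from \cite[Lemmata 3.1, 3.4]{Vau1989}. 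That sup-norm bound requires $0\le\phi\le\tfrac{1}{7}$, and this is precisely what the hypotheses $\del_t\le\tfrac{1}{6}(t-4)$ and $\del_6\le\tfrac{2}{3}$ guarantee (not merely $\tet_0\le\tfrac{1}{3}$ as you suggest). The second branch is the major-arc contribution: the argument of \cite[\S2]{Woo2000b} feeds $U_6(2Q,R)\ll Q^{3+\del_6}$ into the estimate for $I(\grM)$. One then chooses $\phi$ as the larger of the two thresholds so that both the minor-arc term $\Phi_1$ and the major-arc term $\Phi_3$ are at most $1$; this is why the maximum appears. The estimate (\ref{5.2}) is thus the primary target, and the claim that $\del_6'$ is associated follows from it via the trivial inequality $U_6(P,R)\le\int_0^1|F^2f^4|\d\alp$.

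Your Lemma \ref{lemma4.2} specialisation does recover the first-branch formula when $s=6$, $\gam=0$, but the constraint (\ref{4.3}) then forces $6\le t\le 8$, so the range $4<t<6$ is not covered---and this is where the lemma is actually invoked (process $B_6(5.392938)$ in \S7). Your proposed origin for the second branch, a self-referential H\"older decomposition routed through Lemma \ref{lemma2.3}, is not what is going on; the self-reference enters only through the major-arc input $U_6(Q,R)$ in the older \cite{Woo2000b} argument, with no use of the new $\Ups$-mean-value whatsoever. Finally, your plan to obtain (\ref{5.2}) by re-running \S4 with $F$ in place of two copies of $f$ is backwards relative to the paper: the associated-exponent claim is derived \emph{from} (\ref{5.2}), not the other way around, and the \S4 apparatus (Lemma \ref{lemma4.1}, taken from \cite{Woo1995}) is built specifically around $|f|^s$ rather than mixed moments.
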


\begin{proof} On considering the Diophantine equation underlying (\ref{1.3}), one sees 
that
$$U_6(P,R)\ll \int_0^1|F(\alp;P)^2f(\alp;P,R)^4|\d\alp .$$
Consequently, the confirmation of the estimate (\ref{5.2}) suffices to establish that the 
exponent $\del_6^\prime$ defined in (\ref{5.1}) is associated. We put
$$\phi=\max\left\{\frac{8-t+8\del_t}{24+t+8\del_t},\frac{\del_6}{4+\del_6}\right\}.$$
Our hypotheses concerning $t$, $\del_t$ and $\del_6$ ensure that
$$\frac{8-t+8\del_t}{24+t+8\del_t}\le \frac{8-t+\tfrac{4}{3}(t-4)}{24+t+\tfrac{4}{3}
(t-4)}=\frac{8+t}{56+7t}\le \frac{16}{112}=\frac{1}{7},$$
and
$$\frac{\del_6}{4+\del_6}\le \frac{2/3}{4+2/3}=\frac{1}{7},$$
so that $0\le \phi\le \tfrac{1}{7}$. Recall the definitions (\ref{2.1}) and (\ref{2.2}), and 
define $\grm$ and $\grM$ as in the preamble to Lemma \ref{lemma2.1}. Also, when 
$\grB\subseteq [0,1)$, define
\begin{equation}\label{5.3}
I(\grB)=\int_\grB |F_1(\alp)f(\alp;2Q,R)^4|\d\alp .
\end{equation}
Then \cite[inequality (5.3)]{Woo1995} yields the estimate
\begin{equation}\label{5.4}
\int_0^1|F(\alp;P)^2f(\alp;P,R)^4|\d\alp \ll P^\eps M^3\left(PMQ^2+I([0,1))\right) .
\end{equation}

\par We begin with a discussion of the minor arc contribution $I(\grm)$. By applying 
H\"older's inequality to (\ref{5.3}), one obtains
\begin{equation}\label{5.5}
I(\grm)\ll U_t(2Q,R)^{4/t}\biggl( \int_\grm |F_1(\alp)|^{t/(t-4)}\d\alp\biggr)^{1-4/t}.
\end{equation}
Since we may assume that $\del_t$ is an associated exponent, we have
$$U_t(2Q,R)\ll Q^{t/2+\del_t+\eps}.$$
Also, on recalling that $0\le \phi\le \tfrac{1}{7}$, it follows from 
\cite[inequality (5.4)]{Woo1995} together with the argument of the proof of 
\cite[Lemma 3.7]{Vau1989} that
\begin{align*}
\int_\grm |F_1(\alp)|^{t/(t-4)}\d\alp&\le \Bigl( \sup_{\alp\in \grm}|F_1(\alp)|
\Bigr)^{\tfrac{8-t}{t-4}}\int_0^1|F_1(\alp)|^2\d\alp \\
&\ll P^\eps \left( (PM)^{1/2}H\right)^{\tfrac{8-t}{t-4}}PMH.
\end{align*}
Thus we deduce from (\ref{5.5}) that
\begin{equation}\label{5.6}
I(\grm)\ll P^\eps (PM)^{1/2}H^{4/t}Q^{2+4\del_t/t}.
\end{equation}

\par In order to estimate $I(\grM)$, we have merely to follow the argument leading to 
\cite[equation (2.10)]{Woo2000b}. Thus, again making use of the fact that 
$0\le \phi\le \tfrac{1}{7}$, the estimate preceding \cite[equation (2.10)]{Woo2000b} 
gives
\begin{align}
I(\grM)&\ll P^{1+\eps}HM(PQ^{-2})^{2/3}(Q^5)^{1/3}+P^{1+\eps}HM^{1/2}
(PQ^{-2})^{1/2}(Q^{3+\del_6})^{1/2}\notag \\
&\ll P^{1+\eps}HMQ\left( (PQ^{-1})^{2/3}+(P(QM)^{-1})^{1/2}Q^{\del_6/2}\right) .
\label{5.7}
\end{align}
By combining (\ref{5.6}) and (\ref{5.7}), we obtain an estimate for $I([0,1))$. By 
substituting this into (\ref{5.4}) and recalling (\ref{2.1}), we deduce that
$$\int_0^1|F(\alp;P)^2f(\alp;P,R)^4|\d\alp \ll P^{3+\eps}M^2
(1+\Phi_1+\Phi_2+\Phi_3),$$
where
$$\Phi_1=(PM)^{-1/2}H^{4/t}Q^{4\del_t/t},\quad \Phi_2=M^{-4/3}\quad \text{and}
\quad \Phi_3=M^{-2}Q^{\del_6/2}.$$
In view of (\ref{2.1}), one finds that the respective conditions
$$\phi\ge \frac{8-t+8\del_t}{24+t+8\del_t}\quad \text{and}\quad \phi\ge 
\frac{\del_6}{4+\del_6}$$
ensure that $\Phi_1\le 1$ and $\Phi_3\le 1$. Thus, our choice of $\phi$ ensures that
$$\int_0^1|F(\alp;P)^2f(\alp;P,R)^4|\d\alp \ll P^{3+\eps}M^2=P^{3+2\phi+\eps},$$
confirming the estimate (\ref{5.2}) and completing the proof of the lemma.
\end{proof}

We recall also an estimate for associated exponents $\del_s$ of use when 
$s>\tfrac{13}{2}$. 

\begin{lemma}\label{lemma5.2} Suppose that $s>4$. Then whenever 
$\del_{s-2}\le \tfrac{1}{4}$ and $\del_{4(s-2)/3}\le 1$ are associated exponents, so too 
is $\del_s=\del_{s-2}(1-\tet)+\tfrac{1}{2}(s-2)\tet$, where
$$\tet=\frac{1+3\del_{4(s-2)/3}-4\del_{s-2}}{9+3\del_{4(s-2)/3}-4\del_{s-2}}.$$
\end{lemma}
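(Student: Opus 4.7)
The plan is to derive this lemma as an immediate specialization of Lemma \ref{lemma4.2}, with the parameter choice $\gam=\tfrac{1}{4}$ and $t=4(s-2)/3$. The first step is to confirm these choices are admissible. With $\gam=\tfrac{1}{4}$, one has
$$\frac{2s-6+8\gam}{1+2\gam}=\frac{2s-4}{3/2}=\frac{4(s-2)}{3}=\frac{2s-4}{1+2\gam},$$
so the two-sided inequality (\ref{4.3}) collapses to equality and forces exactly the value $t=4(s-2)/3$. Since $0\le\gam\le\tfrac{1}{4}$ holds at the endpoint, Lemma \ref{lemma4.2} is applicable, with the hypotheses on $\del_{s-2}$ and $\del_{4(s-2)/3}$ being precisely the associated-exponent hypotheses demanded there.

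Next I would compute $\tet_0$ from (\ref{4.4}) under these substitutions. The crucial algebraic windfall of choosing $\gam=\tfrac14$ is that $4\gam t=t$, so the term $t-4\gam t$ in the denominator vanishes, leaving denominator $6s-12+2(s-2)\del_t-2t\del_{s-2}$. With $t=4(s-2)/3$ one checks that every summand in both numerator and denominator carries a factor of $(s-2)$: specifically, the numerator equals $\tfrac{2}{3}(s-2)\bigl(1+3\del_{4(s-2)/3}-4\del_{s-2}\bigr)$ and the denominator equals $\tfrac{2}{3}(s-2)\bigl(9+3\del_{4(s-2)/3}-4\del_{s-2}\bigr)$. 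Cancelling the common factor produces exactly the claimed expression for $\tet$.

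The final step is to verify that $\tet_0$ already lies in $[0,\tfrac{1}{3}]$ under the stated hypotheses, so that the clipping in Lemma \ref{lemma4.2} is inactive and $\tet=\tet_0$. The denominator is at least $9-4\del_{s-2}\ge 8>0$, eliminating any sign concerns. For $\tet_0\ge 0$, the numerator satisfies $1+3\del_{4(s-2)/3}-4\del_{s-2}\ge 1-4\del_{s-2}\ge 0$ by the hypothesis $\del_{s-2}\le\tfrac{1}{4}$ and nonnegativity of $\del_{4(s-2)/3}$. For $\tet_0\le\tfrac{1}{3}$, cross-multiplying and simplifying reduces to the inequality $6\del_{4(s-2)/3}\le 6+8\del_{s-2}$, which is implied by $\del_{4(s-2)/3}\le 1$. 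No step of this argument presents a real obstacle; the only point requiring care is the algebraic bookkeeping in the previous paragraph, where the choice $\gam=\tfrac{1}{4}$ is precisely the one that makes the denominator factor cleanly so that the $t=4(s-2)/3$ case produces the elegant closed form stated in the lemma.
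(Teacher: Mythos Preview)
Your argument is correct. The paper, however, does not prove Lemma \ref{lemma5.2} internally: it simply cites the Corollary to Lemma~2 of \cite{BBW1995}. So your route is genuinely different---you derive the statement as the endpoint case $\gam=\tfrac14$, $t=4(s-2)/3$ of the new Lemma \ref{lemma4.2}, whereas the paper imports it from earlier literature.

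It is worth observing what your choice of parameters does inside the proof of Lemma \ref{lemma4.2}. At $\gam=\tfrac14$ with $t$ at the upper endpoint of (\ref{4.3}) one has $\gam_1=\tfrac14(2s-4-t(1+2\gam))=0$, so the H\"older decomposition (\ref{4.5}) places zero weight on the new mean value $\Ups_{d,e,\pi}$ of Lemma \ref{lemma3.2}. In other words, your specialization recovers precisely the pre-existing \cite{BBW1995} estimate without invoking the novel input of \S\S2--3; this is consistent with Lemma \ref{lemma5.2} being available from the older work. The upshot is that your proof is self-contained within the present paper and makes transparent that Lemma \ref{lemma5.2} sits at the degenerate boundary of the family covered by Lemma \ref{lemma4.2}, while the paper's citation keeps the exposition shorter but obscures this relationship.
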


\begin{proof} This is immediate from \cite[Corollary to Lemma 2]{BBW1995}.
\end{proof}

Finally, we recall a simple consequence of convexity.

\begin{lemma}\label{lemma5.3} Suppose that $s>2$ and $t<s$. Then, whenever 
$\del_{s-t}$ and $\del_{s+t}$ are associated exponents, so too is 
$\del_s=\tfrac{1}{2}(\del_{s+t}+\del_{s-t})$.
\end{lemma}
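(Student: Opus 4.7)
The plan is to prove Lemma 5.3 by a direct application of the Cauchy–Schwarz inequality to the integral defining $U_s(P,R)$. The exponents $s-t$ and $s+t$ are the natural candidates to split into, since their average is $s$, so Cauchy–Schwarz should give us exactly the desired convexity bound with no loss.

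First, I would write
\begin{equation*}
|f(\alp;P,R)|^s = |f(\alp;P,R)|^{(s-t)/2}\cdot |f(\alp;P,R)|^{(s+t)/2},
\end{equation*}
which is valid since $s > 2$ and $t < s$ ensure both exponents $(s\pm t)/2$ are nonnegative. Integrating over $[0,1)$ and applying Cauchy–Schwarz then yields
\begin{equation*}
U_s(P,R) \le U_{s-t}(P,R)^{1/2}\,U_{s+t}(P,R)^{1/2}.
\end{equation*}

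Next, I invoke the definition of an associated exponent, which gives us $U_{s-t}(P,R)\ll P^{(s-t)/2+\del_{s-t}+\eps}$ and $U_{s+t}(P,R)\ll P^{(s+t)/2+\del_{s+t}+\eps}$. Substituting these bounds into the previous inequality, the $\pm t$ contributions in the exponents cancel on averaging, and one obtains
\begin{equation*}
U_s(P,R) \ll P^{s/2 + \tfrac{1}{2}(\del_{s-t}+\del_{s+t})+\eps},
\end{equation*}
which by definition means that $\del_s = \tfrac{1}{2}(\del_{s-t}+\del_{s+t})$ is an associated exponent.

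There is essentially no obstacle: the argument is a one-line Cauchy–Schwarz, and the only mild thing to check is that the definition of an associated exponent absorbs the harmless factor $P^\eps$ arising from replacing $2\eps$ with $\eps$, which is standard given the paper's convention on $\eps$ introduced at the end of Section 1.
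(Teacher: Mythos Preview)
Your proof is correct and is precisely the standard convexity argument underlying this result. The paper itself does not give a proof but merely cites \cite[Lemma 4.3]{BW2001}; your Cauchy--Schwarz argument is exactly the content of that cited lemma, so there is nothing to add.
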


\begin{proof} This is \cite[Lemma 4.3]{BW2001}.
\end{proof}

\section{The Keil-Zhao device} Lilu Zhao \cite[equation (3.10)]{Zha2014} has observed 
that, in wide generality, one may obtain an estimate of Weyl-type for an exponential sum 
over an arbitrary set, provided this sum inhabits an appropriate mean valuee. The same 
idea is applied also in independent work of Keil \cite[page 608]{Kei2014}. This observation 
is useful in obtaining permissible exponents $\mu_s$ when $s>6$. Before announcing our 
conclusions, we introduce some notation useful in its proof. Write
\begin{equation}\label{6.1}
g(\alp;P,R)=\sum_{\substack{x\in \calA(P,R)\\ x>P/2}}e(\alp x^3)\quad \text{and}\quad 
G(\alp)=\sum_{P/2<x\le P}e(\alp x^3).
\end{equation}

\begin{lemma}\label{lemma6.1} Suppose that $s\ge 6$ and the exponent $\Del_s$ is 
admissible. Suppose also that $\tfrac{1}{16}(8-s)\le \Del_s\le \tfrac{1}{4}$ and 
$u>s+8\Del_s$. Then there exist positive numbers $\eta$ and $c$, depending at most on 
$u$, with the following property. Whenever $P$ is sufficiently large in terms of $\eta$, and 
$\exp\left( c(\log \log P)^2\right)\le R\le P^\eta$, then
\begin{equation}\label{6.2}
\int_0^1|f(\alp;P,R)|^u\d\alp \ll P^{u-3}.
\end{equation}
In particular, the exponent $\mu_w=w-3$ is permissible for $w\ge u$.
\end{lemma}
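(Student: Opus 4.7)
The plan is to establish the estimate \eqref{6.2} for a single exponent $u$ just above $s+8\Del_s$, after which the extension to all $w\ge u$ follows at once from the trivial bound $|f(\alp;P,R)|\le P$, since this gives $\int_0^1|f(\alp;P,R)|^w\d\alp \le P^{w-u}\int_0^1|f(\alp;P,R)|^u\d\alp \ll P^{w-3}$. Set $t=u-s$. By the hypothesis $u>s+8\Del_s$ together with $\Del_s\le\tfrac{1}{4}$, we may shrink $u$ if necessary so that $8\Del_s<t\le 2$, placing us within the range in which the Keil-Zhao device of the introduction is applicable.

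Next I would perform a Hardy-Littlewood dissection calibrated so that Weyl's inequality is available on the minor arcs. Let $\grm$ denote the set of $\alp\in[0,1)$ for which every coprime Dirichlet approximation $|q\alp-a|\le q^{-1}P^{-9/4}$ forces $q>P^{3/4}$, and set $\grM=[0,1)\setminus\grm$. Weyl's inequality then yields
$$\sup_{\alp\in\grm}|F(\alp;P)|\ll P^{3/4+\eps}.$$

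On the minor arcs, the Keil-Zhao device of the introduction (valid for $s\ge 6$ and $0<t\le 2$) furnishes
$$\int_\grm|f(\alp;P,R)|^u\d\alp \ll P^{t/2}\Bigl(\sup_{\alp\in\grm}|F(\alp;P)|\Bigr)^{t/2}\int_0^1|f(\alp;P,R)|^s\d\alp.$$
Inserting the Weyl estimate together with the admissibility of $\Del_s$, which gives $\int_0^1|f(\alp;P,R)|^s\d\alp\ll P^{s-3+\Del_s+\eps}$, we obtain
$$\int_\grm|f(\alp;P,R)|^u\d\alp \ll P^{t/2+3t/8+s-3+\Del_s+\eps}=P^{u-3+\Del_s-t/8+\eps}.$$
Since $t>8\Del_s$, the net exponent is strictly smaller than $u-3$ once $\eps$ is chosen sufficiently small, delivering an acceptable minor-arc contribution.

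For the major arcs $\grM$, the standard pruning argument resting on the asymptotic formula for $f(\alp;P,R)$ (as developed in \cite{Vau1989}) yields $\int_\grM|f(\alp;P,R)|^u\d\alp \ll P^{u-3}$ for any $u>3$, through convergence of the singular series and the singular integral. The main obstacle is verifying that the Keil-Zhao device applies uniformly across $\grm$ under our hypotheses; its proof rests on a Cauchy-Schwarz-type comparison between $|f|^2$ and $P\cdot|F|$ inside the mean value $\int|f|^s$, which is why the assumption $s\ge 6$ is needed (enough cubes for the device to exploit). The mild lower bound $\Del_s\ge\tfrac{1}{16}(8-s)$ is harmless here, serving only to ensure that $u=s+t\ge s+8\Del_s$ lies in the nontrivial range for which the combined estimate improves upon convexity. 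Combining the two arc estimates completes the proof of \eqref{6.2}, and hence of the lemma.
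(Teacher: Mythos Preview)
Your argument has a genuine gap: you invoke the displayed Keil--Zhao estimate from the introduction as though it were an established inequality, but that display is only an informal announcement of what \S6 achieves; Lemma~\ref{lemma6.1} \emph{is} its proof. The device itself proceeds by writing
\[
\int_\grm |f|^{s+2}=\sum_{x,y\in\calA(P,R)}\int_\grm |f|^s e(\alp(x^3-y^3))\d\alp,
\]
applying Cauchy--Schwarz in $x,y$, and then dropping the smoothness constraint. What emerges is the bound
\[
\Bigl(\int_\grm |f|^{s+2}\d\alp\Bigr)^2\le P^2\int_\grm\int_\grm |f(\alp)f(\bet)|^s\,|F(\alp-\bet)|^2\d\alp\d\bet,
\]
and the crucial point is that $F$ is evaluated at $\alp-\bet$, which need not lie in $\grm$ even when $\alp,\bet\in\grm$. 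In fact the dominant contribution comes precisely from $\alp-\bet$ near the major arcs, where $|F|$ is large. This is why the paper introduces a second dissection in $\alp-\bet$, the majorant $\Ups(\gam)$, and the Ramanujan-sum estimate culminating in (\ref{6.13}). Simply inserting $\sup_{\grm}|F|$ is not justified.

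Even setting that aside, your numerics fail. For cubes, Weyl's inequality gives
\[
|F(\alp;P)|\ll P^{1+\eps}(q^{-1}+P^{-1}+qP^{-3})^{1/4},
\]
so on your minor arcs (with $P^{3/4}<q\le P^{9/4}$) one obtains only $\sup_\grm|F|\ll P^{13/16+\eps}$, not $P^{3/4+\eps}$. Feeding $\sigma=13/16$ into your scheme yields
\[
\int_\grm |f|^u\d\alp\ll P^{u-3+\Del_s-3t/32+\eps},
\]
and now the hypothesis $t>8\Del_s$ is insufficient: one would need $t>\tfrac{32}{3}\Del_s$, delivering only the weaker conclusion $u>s+\tfrac{32}{3}\Del_s$. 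The paper's double-integral analysis recovers the full strength $u>s+8\Del_s$, and it is there that the hypothesis $\Del_s\ge\tfrac{1}{16}(8-s)$ is genuinely used (to ensure that the term $P^{-(s-4)/16}$ arising from the diagonal estimate does not dominate); it is not ``harmless'' as you suggest. Finally, your major-arc treatment is too casual: the paper handles this via \cite[Corollary~3.2]{BW2001} on the narrow arcs $\grM(P^{3/8})$, and removes the $\eps$ at the end via \cite[Lemma~4.5]{BW2001}.
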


\begin{proof} We seek to show that whenever $v\ge s+8\Del_s$, then
\begin{equation}\label{6.3}
\int_0^1|f(\alp;P,R)|^v\d\alp \ll P^{v-3+\eps}.
\end{equation}
When $u>v$, the bound (\ref{6.2}) follows from this estimate via 
\cite[Lemma 4.5]{BW2001}. Next, by applying a dyadic dissection, we deduce from 
(\ref{1.1}) and (\ref{6.1}) that
$$f(\alp;P,R)=\sum^\infty_{\substack{j=0\\ 2^j\le \sqrt{P}}}g(\alp;2^{-j}P,R)
+O(\sqrt{P}),$$
whence an application of H\"older's inequality reveals that
\begin{align*}
\int_0^1|f(\alp;P,R)|^v\d\alp &\ll (\log P)^{v-1}\sum^\infty_{\substack{j=0\\ 2^j\le 
\sqrt{P}}} \int_0^1|g(\alp;2^{-j}P,R)|^v\d\alp+P^{v/2}\\
&\ll P^\eps \max_{\sqrt{P}\le X\le P}\int_0^1|g(\alp;X,R)|^v\d\alp +P^{v/2}.
\end{align*}
Consequently, provided we are able to show that 
\begin{equation}\label{6.4}
\int_0^1|g(\alp;P,R)|^v\d\alp\ll P^{v-3+\eps},
\end{equation}
then the bound (\ref{6.3}) follows. Henceforth, we abbreviate $g(\alp;P,R)$ to $g(\alp)$.

\par We establish (\ref{6.4}) via the Hardy-Littlewood method. When $1\le X\le P$, define 
the major arcs $\grM(X)$ to be the union of the intervals
$$\grM(q,a;X)=\{\alp\in[0,1):|q\alp-a|\le XP^{-3}\},$$
with $0\le a\le q\le X$ and $(a,q)=1$. Also, put $\grm(X)=[0,1)\setminus \grM(X)$. Finally, 
write $\grP=\grM(P^{4/5})$, $\grQ=\grM(P^{3/8})$, $\grp=\grm(P^{4/5})$ and 
$\grq=\grm(P^{3/8})$.\par

We begin by observing that, as a consequence of \cite[Corollary 3.2]{BW2001}, one has
$$\int_\grQ |f(\alp;P,R)|^6\d\alp +\int_\grQ|f(\alp;P/2,R)|^6\d\alp \ll P^{3+\eps},$$
so that
$$\int_\grQ |g(\alp)|^6\d\alp \ll P^{3+\eps}.$$
Since $|g(\alp)|=O(P)$, we find that whenever $v\ge 6$, one has
\begin{equation}\label{6.5}
\int_\grQ|g(\alp)|^v\d\alp \ll P^{v-3+\eps}.
\end{equation}

\par Suppose next that $\alp\in \grq$. By Dirichlet's theorem on Diophantine 
approximation, there exist $a\in \dbZ$ and $q\in \dbN$ with $(a,q)=1$, $q\le P^{11/5}$ 
and $|q\alp-a|\le P^{-11/5}$. An application of \cite[Lemma 2.2]{BW2001} in concert 
with \cite[equation (2.1)]{BW2001} delivers the estimate
$$g(\alp)\ll \frac{q^{\eps-1/6}P(\log P)^{5/2+\eps}}{(1+P^3|\alp-a/q|)^{1/3}}
+P^{9/10+\eps}.$$
When $\alp\in \grp$, it follows that $q>P^{4/5}$, and thus $g(\alp)\ll P^{9/10+\eps}$. 
Meanwhile, when $\alp\in \grP\cap \grq$, we have either $q>P^{3/8}$ or 
$|q\alp-a|>P^{-21/8}$, and hence $|g(\alp)|\ll P^{15/16+\eps}$. Consequently, since 
$\grq=\grp\cup (\grP\cap \grq)$, we conclude that
\begin{equation}\label{6.6}
\sup_{\alp\in \grq}|g(\alp)|\ll P^{15/16+\eps}.
\end{equation}

\par We now turn to the main task at hand. Suppose that $s\ge 6$ and that $\Del_s$ 
is an admissible exponent. We consider the mean value
\begin{equation}\label{6.7}
T_0=\int_\grq |g(\alp)|^{s+2}\d\alp .
\end{equation}
By reference to (\ref{6.1}), an application of Cauchy's inequality shows that
\begin{equation}\label{6.8}
T_0=\sum_{\substack{x\in \calA(P,R)\\ x>P/2}}\sum_{\substack{y\in \calA(P,R)\\ 
y>P/2}}\int_\grq |g(\alp)|^se(\alp(x^3-y^3))\d\alp \le PT_1^{1/2},
\end{equation}
where
$$T_1=\sum_{\substack{P/2<x,y\le P\\ x,y\in \calA(P,R)}}\biggl| \int_\grq 
|g(\alp)|^se(\alp(x^3-y^3))\d\alp \biggr|^2.$$
We bound $T_1$ above by removing the condition $x,y\in \calA(P,R)$, obtaining
$$T_1\le \sum_{P/2<x,y\le P}\int_\grq \int_\grq |g(\alp)g(\bet)|^se\left( (\alp-\bet)
(x^3-y^3)\right) \d\alp \d\bet .$$
Thus, again recalling (\ref{6.1}), we deduce by means of (\ref{6.8}) that
\begin{equation}\label{6.9}
T_0^2\le P^2\int_\grq \int_\grq |g(\alp)g(\bet)|^s|G(\alp-\bet)|^2\d\alp \d\bet .
\end{equation}

\par We analyse the mean value on the right hand side of (\ref{6.9}) by means of the 
Hardy-Littlewood method. Let $\grN=\grM(P^{3/4})$ and $\grn=\grm(P^{3/4})$. Denote 
by $\kap(q)$ the multiplicative function defined on prime powers by taking
$$\kap(p^{3l})=p^{-l},\quad \kap(p^{3l+1})=3p^{-l-1/2},\quad 
\kap(p^{3l+2})=p^{-l-1}\quad (l\ge 0).$$
Also, define the function $\Ups(\gam)$ for $\gam\in \grN$ by taking
\begin{equation}\label{6.10}
\Ups(\gam)=\kap(q)^2(1+P^3|\gam-a/q|)^{-1},
\end{equation}
when $\gam\in \grM(q,a;P^{3/4})\subseteq \grN$, and put $\Ups(\gam)=0$ when 
$\gam \in \grn$. Then it follows from \cite[Lemma 2.1]{KW2001} that 
$G(\gam)^2\ll P^2\Ups(\gam)+P^{3/2+\eps}$. Substituting this estimate into (\ref{6.9}), 
we deduce that
\begin{equation}\label{6.11}
T_0^2\ll P^{7/2+\eps}\biggl( \int_0^1 |g(\alp)|^s\d\alp \biggr)^2+P^4T_2,
\end{equation}
where
$$T_2=\int_\grq \int_\grq \Ups(\alp-\bet)|g(\alp)g(\bet)|^s\d\alp \d\bet .$$

\par By applying the trivial inequality $|z_1\cdots z_n|\le |z_1|^n+\ldots +|z_n|^n$, we 
find that
$$|g(\alp)g(\bet)|^s\ll |g(\alp)g(\bet)^{s-1}|^2+|g(\bet)g(\alp)^{s-1}|^2.$$
Hence, by symmetry, we obtain the estimate
$$T_2\ll \Bigl( \sup_{\bet\in \grq}|g(\bet)|\Bigr)^{s-4}\int_\grq \int_\grq 
\Ups(\alp-\bet)|g(\bet)^{s+2}g(\alp)^2|\d\alp \d\bet .$$
By invoking (\ref{6.6}), we thus deduce that
\begin{equation}\label{6.12}
T_2\ll (P^{15/16+\eps})^{s-4}\int_\grq |g(\bet)|^{s+2}\int_0^1\Ups(\alp-\bet)
|g(\alp)|^2\d\alp \d\bet .
\end{equation}

\par On recalling the definitions (\ref{6.1}) and (\ref{6.10}), we discern that
\begin{align*}
\int_0^1\Ups(\alp-\bet)|g(\alp)|^2\d\alp &=\int_\grN \Ups(\gam)
|g(\gam+\bet)|^2\d\gam \le \sum_{1\le q\le P^{3/4}}\kap(q)^2\Lam(q),
\end{align*}
where
$$\Lam(q)=\sum^q_{\substack{a=1\\ (a,q)=1}}\int_{-P^{-9/4}}^{P^{-9/4}}
(1+P^3|\tet|)^{-1}\biggl| \sum_{\substack{x\in \calA(P,R)\\ x>P/2}}
e(x^3(\bet+\tet+a/q))\biggr|^2\d\tet .$$
Let $c_q(n)$ be Ramanujan's sum, which we define by
$$c_q(n)=\sum^q_{\substack{a=1\\ (a,q)=1}}e(an/q).$$
Then it follows that
$$\sum^q_{\substack{a=1\\ (a,q)=1}}\biggl| \sum_{\substack{x\in \calA(P,R)\\ 
x>P/2}}e(x^3(\bet+\tet+a/q))\biggr|^2=\sum_{\substack{P/2<x,y\le P\\ 
x,y\in \calA(P,R)}}c_q(x^3-y^3)e((\bet+\tet)(x^3-y^3)).$$
Thus, the well-known estimate $|c_q(n)|\le (q,n)$ yields the bound
$$\Lam(q)\le \sum_{1\le x,y\le P}(q,x^3-y^3)\int_{-P^{-9/4}}^{P^{-9/4}}
(1+P^3|\tet|)^{-1}\d\tet,$$
and consequently
$$\int_0^1\Ups(\alp-\bet)|g(\alp)|^2\d\alp \ll P^{-3}\log (2P)\sum_{1\le q\le  P^{3/4}}
\kap(q)^2\sum_{1\le x,y\le P}(q,x^3-y^3).$$
From here, the treatment following \cite[equation (3.2)]{BW2001} delivers the upper 
bound
\begin{equation}\label{6.13}
\int_0^1\Ups(\alp-\bet)|g(\alp)|^2\d\alp \ll P^{\eps-1}.
\end{equation}

\par Next, substituting (\ref{6.13}) into (\ref{6.12}), we infer that
$$T_2\ll P^{\eps-1}(P^{15/16})^{s-4}\int_\grq|g(\bet)|^{s+2}\d\bet.$$
In view of (\ref{6.7}) and (\ref{6.11}), the hypothesis that $\Del_s$ is admissible yields
$$T_0^2\ll P^{7/2+\eps}\left( P^{s-3+\Del_s}\right)^2+P^{3+\eps}\left( 
P^{15/16}\right)^{s-4}T_0,$$
whence
$$T_0\ll P^{s-1+\eps}\left( P^{\Del_s-1/4}+P^{-(s-4)/16}\right) .$$
On recalling (\ref{6.7}), application of H\"older's inequality and the trivial estimate 
$|g(\alp)|\le P$ delivers the upper bound
\begin{align*}
\int_\grq |g(\alp)|^v\d\alp&\le P^{v-(s+8\Del_s)}T_0^{4\Del_s}
\biggl( \int_0^1|g(\alp)|^s\d\alp \biggr)^{1-4\Del_s}\\
&\ll  P^{v-s-8\Del_s+\eps}\left( P^{s}\left( 
P^{\Del_s-5/4}+P^{-(s+12)/16}\right) \right)^{4\Del_s}\left( 
P^{s-3+\Del_s}\right)^{1-4\Del_s}.
\end{align*}
Thus we deduce that whenever $\Del_s\ge \tfrac{1}{16}(8-s)$, then
$$\int_\grq |g(\alp)|^v\d\alp \ll P^{v-3+\eps}\left(1+P^{-\Del_s+(8-s)/16}\right)\ll 
P^{v-3+\eps}.$$
But the latter condition on $s$ is assured by the hypotheses of the lemma, and thus 
we conclude via (\ref{6.5}) that
$$\int_0^1|g(\alp)|^v\d\alp =\int_\grQ |g(\alp)|^v\d\alp +\int_\grq|g(\alp)|^v
\d\alp \ll P^{v-3+\eps}.$$
This confirms the estimate (\ref{6.4}), and the conclusion of the lemma follows.
\end{proof}

\section{Computations} We now address the problem of how to implement the 
computation of associated exponents $\del_s$ for $4\le s\le 8$. Let $h$ be a small 
positive number that we view as a step size, and put $J=\lceil 16/h\rceil$. It is convenient in 
what follows to assume that $1/h\in \dbN$. We begin with an array of known associated 
exponents $\del_{jh}$ $(0\le j\le J)$. Thus, we have the associated exponents 
$\del_4=0$ and $\del_s=\tfrac{1}{2}s-3$ $(s\ge 8)$ which follow from Hua's lemma (see 
\cite[Lemma 2.5]{Vau1997}). Making use also of the associated exponent 
$\del_6=\tfrac{1}{4}$ due to Vaughan \cite[Theorem 4.4]{Vau1989}, one may apply 
convexity to deliver the associated exponents
$$\del_s=\max\left\{ 0,\tfrac{1}{8}(s-4),\tfrac{3}{8}s-2,\tfrac{1}{2}s-3\right\} .$$
For the interesting values of $j$ with $4<jh<8$, one may now calculate new associated 
exponents $\del_{jh}$ by means of Lemmata \ref{lemma4.2}, 
\ref{lemma5.1}-\ref{lemma5.3} and \ref{lemma6.1}. Here, we note that associated 
exponents $\del_s$ are related to admissible exponents $\Del_s$ by means of the 
relation $\del_s=\tfrac{1}{2}s-3+\Del_s$. Should any of these new associated exponents 
be superior to the old ones, then they may be substituted into the array of values 
$\del_{jh}$. By iterating this process for $4/h<j<8/h$, one derives new associated 
exponents converging to some set of limiting values.\par

We summarise the formulae delivered by the above-cited lemmata as follows.\vskip.1cm

\noindent (i) \emph{Method $A_s(t,\gam)$.} We apply Lemma \ref{lemma4.2} for 
$\gam=lh$ and $t=mh$ with $0\le l\le (4h)^{-1}$ and
\begin{equation}\label{7.1}
\frac{2jh-6+8lh}{1+2lh}\le mh\le \frac{2jh-4}{1+2lh}.
\end{equation}
Thus one finds that the exponent $\del_{jh}^\prime$ is associated, where
\begin{equation}\label{7.2}
\del_{jh}^\prime=\del_{jh-2}(1-\tet)+\tfrac{1}{2}(jh-2)\tet ,
\end{equation}
in which $\tet=\max\left\{ 0,\min\left\{ \tet_0,\tfrac{1}{3}\right\} \right\}$, and
$$\tet_0=\frac{2jh-4-mh+2(jh-2)\del_{mh}-2mh\del_{jh-2}}{6jh-12+mh-4(lh)(mh)+
2(jh-2)\del_{mh}-2mh\del_{jh-2}}.$$
\vskip.1cm

\noindent (ii) \emph{Method $B_6(t)$.} We apply Lemma \ref{lemma5.1} for $t=mh$ 
with $4<mh\le 8$. Thus, when $\del_{mh}\le \tfrac{1}{6}(mh-4)$, we find that the 
exponent $\del_6^\prime$ is associated, where
$$\del_6^\prime =2\max \left\{ \frac{8-mh+8\del_{mh}}{24+mh+8\del_{mh}},
\frac{\del_6}{4+\del_6}\right\} .$$
\vskip.1cm

\noindent (iii) \emph{Method $C_s$.} First, if $i$ is the integer for which 
$\tfrac{4}{3}(j-2/h)\in (i,i+1]$, then convexity provides the associated exponent
$$\del_{4(jh-2)/3}=\left(i+1-\tfrac{4}{3}(j-2/h)\right)\del_{ih}+
\left(\tfrac{4}{3}(j-2/h)-i\right)\del_{(i+1)h}.$$
Next, Lemma \ref{lemma5.2} shows the exponent $\del_{jh}^\prime$ given by 
(\ref{7.2}) to be associated, where
$$\tet_0=\frac{1+3\del_{4(jh-2)/3}-4\del_{jh-2}}{9+3\del_{4(jh-2)/3}-4\del_{jh-2}}.$$
\vskip.1cm

\noindent (iv) \emph{Process $L_s(t)$.} We apply Lemma \ref{lemma5.3} for $t=mh$ 
with $1\le m\le 1/h$. Thus one finds that the exponent $\del_{jh}^\prime$ is associated, 
where $\del_{jh}^\prime =\tfrac{1}{2}(\del_{(j+m)h}+\del_{(j-m)h})$.
\vskip.1cm

\noindent (v) \emph{Process $W_s$.} We apply Lemma \ref{lemma6.1}. Thus one finds that 
$\del_{jh}^\prime=\tfrac{1}{2}jh-3$ is an associated exponent whenever $\del_{jh-mh}$ 
is associated and satisfies
$$3-\tfrac{1}{2}(j-m)h+\del_{jh-mh}<\tfrac{1}{8}mh.$$
\vskip.1cm

We wrote a straightforward computer program to implement this iterative process. Our 
language of choice was the QB64 implementation of QuickBasic, running on a Windows 
Surface Pro3 in Windows 8.1 (Intel Core i3 processor at 1.5 GHz). All parameters were 
stored using double precision variables. The most time consuming method to apply is 
process $A_s(t,\gam)$, since there are many possible choices for $t=mh$ and $\gam=lh$ 
to test. It is apparent that $\gam$ should be chosen as small as possible consistent with 
the constraint (\ref{7.1}). However, applying process $A_s(t,\gam)$ for each eligible 
value of $s=jh$ $(4<s<8)$ nonetheless has running time with order of growth $h^{-2}$. 
This limited our computation, in the first instance, to a step size of $h\ge 10^{-4}$.\par

Having experimented with this iteration, it becomes apparent that certain of the processes 
dominate the others for different values of $s$. By refining the program to select dominant 
processes for different ranges of $s$, the running time is vastly improved to order of growth 
$h^{-1}$. Note that the array size limit effective for QB64 on the platform employed was 
at least $2\times 10^8$. Thus, final computations with step size $h=10^{-6}$ were 
feasible for $4<s\le 6.5$, and step size $h=10^{-5}$ throughout $4<s\le 8$, this being 
limited only by running-time considerations rather than memory limitations. We summarise 
below the parameters associated with these dominant processes.\vskip.1cm

\noindent (i) $4<s\le 5$. Process $A_s(2s-4,0)$, so that $\del_s^\prime$ is determined 
according to Corollary \ref{corollary4.3}. Thus $\del_{jh}^\prime$ is given by 
(\ref{7.2}) with
$$\tet_0=\frac{\del_{2jh-4}}{4+\del_{2jh-4}}.$$
\vskip.1cm

\noindent (ii) $5<s\le 5.6462$. Process $A_s(6,0)$, so that $\del_s^\prime$ is determined 
according to Corollary \ref{corollary4.4}. Thus $\del_{jh}^\prime$ is given by (\ref{7.2}) 
with
$$\tet_0=\frac{jh-5+(jh-2)\del_6}{3jh-3+(jh-2)\del_6}.$$
\vskip.1cm

\noindent (iii) $5.6462<s<6$. Process $L_s(t)$, linear interpolation between 
$\del_{5.6462}$ and $\del_6$.\vskip.1cm

\noindent (iv) $s=6$. Process $B_6(5.392938)$.\vskip.1cm

\noindent (v) $6<s\le 6.081$. Process $L_s(t)$, linear interpolation between 
$\del_6$ and $\del_{6.081}$.\vskip.1cm

\noindent (vi) $6.081<s\le 6.3395$. Process $A_s(6,\tfrac{1}{2}(s-6))$, so that 
$\del_s^\prime$ is determined according to Corollary \ref{corollary4.5}. Thus 
$\del_{jh}^\prime$ is given by (\ref{7.2}) with
$$\tet_0=\frac{jh-5+(jh-2)\del_6-6\del_{jh-2}}{33-3jh+(jh-2)\del_6-6\del_{jh-2}}.$$
\vskip.1cm

\noindent (vii) $6.3395<s\le 6.5$. Process $L_s(t)$, linear interpolation between 
$\del_{6.3395}$ and $\del_{6.5}$.\vskip.1cm

\noindent (viii) $6.5<s\le 7.06$. Processes $C_s$ and $L_s(t)$.\vskip.1cm

\noindent (ix) $7.06<s<8$. Processes $W_s$ and $L_s(t)$.\vskip.1cm

Some additional discussion seems warranted concerning the robustness of these 
computations. The first point to make is that, while the above restricted iteration may 
not be guaranteed to deliver optimal estimates, the exponents that it delivers will at least 
be legitimate associated exponents. Thus the exponents presented in Table 1 in the 
introduction may be considered upper bounds for optimal associated exponents. In this 
context, it is worth noting that we experimented with adjustments to the step size $h$, 
and found no improvement in the first $8$ digits of the decimal expansions of the 
computed values of $\del_s$, even when $h$ varied from $10^{-4}$ to $10^{-6}$.\par

The second point concerns the stability of the iteration. There is a potential danger in 
iterations involving large numbers of cycles that round-off errors may accumulate, leading 
to substantial cumulative errors and even to unstable iterative processes. In our 
computations, we exercised some caution concerning this issue by artificially inflating the 
newly computed associated exponents by adding a small positive quantity $\tau$ at the 
end of each iteration. Thus, with $\tau=10^{-9}$, we replaced the newly computed 
associated exponent $\del_s$ by $\del_s+\tau$. This has the effect of slightly weakening 
our exponents, though round-off errors (which in double-precision arithmetic are very 
much smaller) are swamped by this cushion of numerical security. This device has the 
effect of permitting some control on the number of decimal digits reliably computed.\par

We now interpret these computations in the context of the 
conclusions presented in the introduction. First, Theorem \ref{theorem1.2} follows from 
the computed associated exponent $\del_t=0.14963020$ for $t=5.392938$ that follows 
from the computations underlying Table 1 via convexity, and the upper bound (\ref{5.2}) 
of Lemma \ref{lemma5.1}. Next, the exponent $\Del_{7.1}=0.06131437$ is admissible, 
according to Theorem \ref{theorem1.5} and the associated Table 1. Then it follows from 
Lemma \ref{lemma6.1} that
$$\int_0^1|f(\alp;P,R)|^u\d\alp \ll P^{u-3}$$
whenever $u>7.1+8\Del_{7.1}=7.59051\ldots $. This establishes Theorem \ref{theorem1.4}. Finally, the proof 
of Theorem \ref{theorem1.1} is a standard consequence of Theorem \ref{theorem1.2}, 
following an application of Cauchy's inequality. The proof of 
\cite[Theorem 1.1]{Woo2000b} to be found in the final phases of \cite[\S2]{Woo2000b} 
shows, for example, that whenever $\del_6$ is an associated exponent, then 
$N(X)\gg X^{1-\del_6/3-\eps}$. The conclusion of Theorem \ref{theorem1.1} therefore follows on making use of the 
associated exponent $\del_6=0.24871567$. Note also that Theorem \ref{theorem1.5} for 
$s=4$ follows from \cite{Hoo1963}.

\bibliographystyle{amsbracket}
\providecommand{\bysame}{\leavevmode\hbox to3em{\hrulefill}\thinspace}

\end{document}